\theoremstyle{plain}
\newtheorem{thm}{Theorem}
\newtheorem{prop}{Proposition}[section]
\newtheorem{lem}[prop]{Lemma}
\newtheorem{rmk}[prop]{Remark}
\newtheorem{alg}[prop]{Algorithm}
\newcommand {\R} {\mathbb{R}} \newcommand {\Z} {\mathbb{Z}}
 \newcommand {\N} {\mathbb{N}}
\newcommand {\p} {\partial}
\newcommand {\supp} {\text{supp}}
\DeclareMathOperator{\F}{\mathcal{F}}
\DeclareMathOperator {\dist} {dist}
\DeclareMathOperator {\intconv} {intconv}
\DeclareMathOperator {\conv}{conv}
\DeclareMathOperator {\inte} {int}
\DeclareMathOperator {\Cover} {Cover}
\DeclareMathOperator {\Replace} {Replace}
\begin{document}

\title[Flexibility, Rigidity and Some Numerical Implementations]{Convex Integration Arising in the Modelling of Shape-Memory Alloys: Some Remarks on Rigidity, Flexibility and Some Numerical Implementations}

\author{Angkana R\"uland }
\author{Jamie M Taylor}
\author{Christian Zillinger}

\address{
Max-Planck-Institute for Mathematics in the Sciences, Inselstraße 22, 04103 Leipzig, Germany}
\email{rueland@mis.mpg.de}

\address{
Department of Mathematics,
University of Southern California,
3620 S. Vermont Avenue,
Los Angeles, CA 90089-2532, US}
\email{zillinge@usc.edu}

\address{
Kent State University - Kent Campus, Department of Mathematical Sciences, Mathematics and Computer Science Building 233, Summit Street, Kent OH 44242, US}
\email{jtayl139@kent.edu}

\begin{abstract}
We study convex integration solutions in the context of the modelling of shape-memory alloys. The purpose of the article is two-fold, treating both rigidity and flexibility properties: Firstly, we relate the maximal regularity of convex integration solutions to the presence of lower bounds in variational models with surface energy. Hence, variational models with surface energy could be viewed as a \emph{selection mechanism} allowing for or excluding convex integration solutions. Secondly, we present the first numerical implementations of convex integration schemes for the model problem of the geometrically linearised two-dimensional hexagonal-to-rhombic phase transformation. We discuss and compare the two algorithms from \cite{RZZ16} and \cite{RZZ17}.
\end{abstract}

\maketitle

\section{Introduction}
\label{sec:intro}

Shape-memory alloys are materials undergoing a first order, diffusionless solid-solid phase transformation in which symmetry is lost. Here typically the high temperature phase, the \emph{austenite}, is highly symmetric (e.g. having atomistically a highly symmetric unit cell structure), while the low temperature phase, the \emph{martensite}, loses some of this symmetry. This loss of symmetry amounts to the presence of multiple, energetically equivalent \emph{variants of martensite}. This renders these materials rather complex and gives rise to many complicated, interesting microstructures in these materials \cite{B}.

\subsection{The Ball-James variational model}
\label{sec:BJ}
Mathematically, various features of the microstructures in shape-memory alloys have been quite successfully modelled by (static) energy minimization \cite{B3}, \cite{Kh13}:
\begin{align}
\label{eq:min}
\min\limits_{y(x)=M(x) \text{ on } \partial \Omega} \int\limits_{\Omega} W(\nabla y, \theta) dx .
\end{align}
Here $\Omega \subset \R^3$ denotes the \emph{reference configuration} (which is often chosen to be the material configuration in the austenite phase at fixed temperature), $y:\Omega \rightarrow \R^3$ denotes the \emph{deformation} of the material which is subjected to the boundary conditions $y(x)=M(x)$ on $\partial \Omega$.
The function $\theta:\Omega \rightarrow \R_+$ represents \emph{temperature} (in normalized units) and $W: \R^{3\times 3}_+ \rightarrow \R_+$ the \emph{stored energy function}. Physical requirements on $W$ are:
\begin{itemize}
\item[(i)] \emph{frame indifference}: $W(QM,\theta)=W(M,\theta)$ for each $M\in \R^{3\times 3}_+$, $Q \in SO(3)$ and $\theta \in \R_+$.
\item[(ii)] \emph{material symmetry}: $W(MH, \theta)= W(M,\theta)$ for each $M\in \R^{3\times 3}_+$, $\theta \in \R_+$ and $H \in \mathcal{P}$, where $\mathcal{P} \subset SO(3)$ denotes the point group of the material, which encodes the symmetries of the material. 
\end{itemize}
The symmetry of the phase transformation in conjunction with these two conditions then allows one to describe the \emph{energy wells} of $W$ for a martensitic phase transformation
\begin{align}
\label{eq:wells}
W(M,\theta)=0 
\Leftrightarrow M \in K(\theta)= \left\{
\begin{array}{ll}
\alpha(\theta)SO(3) \mbox{ if } \theta>\theta_c,\\
\bigcup\limits_{j=1}^{m} SO(3)U_j(\theta) \mbox{ if } \theta\leq \theta_c,
\end{array}
\right.
\mbox{ with } U_j(\theta)^t= U_j(\theta).
\end{align}
Here $\theta_c \in \R_+$ denotes the \emph{transformation temperature} between the austenite and the martensite, $\alpha(\theta)SO(3)$ with $\alpha: \R_+ \rightarrow \R_+$ models the austenite phase at temperature $\theta$ with its \emph{thermal expansion} $\alpha(\theta)$, and the matrices $SO(3)U_j(\theta)$ model the (energetically) equivalent variants of martensite at temperature $\theta\in \R_+$ \cite{Ball:ESOMAT}.
The two conditions (i), (ii) render the minimization problem \eqref{eq:min} \emph{highly non-(quasi)convex} and hard to analyse mathematically (even at a fixed temperature, which is the usual mathematical set-up) \cite{B1, B2}. 

\subsection{The $m$-well problem}
\label{sec:m_well}
As a consequence of the difficulty of dealing with the full problem \eqref{eq:min}, if one is interested in studying low energy microstructures, it is often useful to investigate \emph{exactly stress-free} deformations and microstructures. These are the solutions to the differential inclusion
\begin{align}
\label{eq:m_well}
\nabla y \in K(\theta) \mbox{ a.e. in } \Omega , \ y(x) = M(x) \mbox{ on } \partial \Omega,
\end{align}
where $K(\theta)$ is the set from \eqref{eq:wells}. If $\theta<\theta_c$, \eqref{eq:m_well} is also called an \emph{$m$-well problem} (due to the presence of $m$ variants of martensite). A natural question here is the \emph{existence} of solutions to \eqref{eq:m_well}, respectively, the existence of \emph{energy zero} solutions to \eqref{eq:min}. While highly oscillatory \emph{minimizing sequences} with limiting energy zero had been shown to exist for a large class of boundary data (related to the quasiconvex hull of $K(\theta)$; in general these sequences however only have measure valued limits), \cite{B3, M1}, the existence of exact energy zero solutions was only resolved later by relying on the technique of \emph{convex integration} \cite{MS, MS1, DaM12, K1, DKMS, G}. These quite surprising solutions however are rather ``wild" and oscillatory in that they use a cascade of different scales (similar to convex integration solutions in fluid mechanics and geometry, c.f. \cite{S} and the references therein). This ``wildness" can be quantified in terms of the regularity of the solutions to \eqref{eq:m_well}. In particular, it has been shown in different models \cite{DM1, DM2, K, DKMS, K, R16a} that convex integration solutions can only exist at rough regularities, giving rise to a dichotomy. For instance for the two-well problem \cite{DM1, DM2} (where there are exactly two rank-one connections between the wells) the following is known: 
\begin{itemize}
\item[(a)] If a solution is $BV$ regular, i.e. if $\nabla y \in BV$, then $\nabla y$ is a simple laminate, i.e. it is (up to boundary effects) ``one-dimensional" with a directional dependence which is determined by the structure of the set $K(\theta)$ from \eqref{eq:wells}.
\item[(b)] For any fixed $\theta<\theta_c$ and $M \in \inte (K(\theta)^{lc})$, where $K(\theta)^{lc}$ denotes the \emph{lamination convex hull} of $K(\theta)$, there exist solutions to \eqref{eq:m_well}. By default these solutions are $L^{\infty}$ regular, i.e. $\nabla y \in L^{\infty}$.
\end{itemize}
Thus, a natural question is whether there is a threshold between these two regimes in terms of a \emph{critical regularity threshold} (similar to the case in fluid mechanics and geometry \cite{S, I16, BDLSV17, CDS12}). In contrast to the situation in fluid mechanics, the problem \eqref{eq:m_well} does not have a natural scaling, which would help to indicate a critical threshold. In previous work together with B. Zwicknagl \cite{RZZ16, RZZ17}, we have hence studied the possibility of deriving higher regularity for the solutions from (b) for simplified ``linearised" models. It is one of the purposes of this article to show that \emph{scaling laws} yield lower bounds on the possible regularity of these solutions (and could be viewed as ``viscosity approximations" of the limiting model). Moreover, we present numerical simulations of convex integration solutions for the simplest possible model setting. We explain this in more detail in the following.

\subsection{Surface energy regularisation and scaling laws}
While various important \emph{qualitative} properties, e.g. the emergence of preferred twinning directions as well as the presence of clear volume fractions of certain variants of martensite, have been successfully explained by \eqref{eq:min} and \eqref{eq:m_well}, more \emph{quantitative} properties such as the emergence of length scales are not captured by this. In the literature (c.f. for instance \cite{KM1, CO, CO1, R16b, C1, CC15, KO}) this has been remedied by adding a higher order \emph{surface energy} contribution to \eqref{eq:min}, which is weighted with a small (material dependent) prefactor $\epsilon\in \R_+$:
\begin{align}
\label{eq:min_surf}
\min\limits_{y(x)=M(x) \text{ on } \partial \Omega} \left( \int\limits_{\Omega} W(\nabla y, \theta) dx + \epsilon^2 \int\limits_{\Omega}|\nabla^2 y|^2 dx \right).
\end{align}
From a mathematical point of view the presence of the higher order term in \eqref{eq:min_surf} regularises the problem and restores compactness to it. In general, the exact physical form of the higher order term in \eqref{eq:min_surf} is not known and various types of different regularisations have been used in the literature, including the diffuse, $L^2$ variant stated in \eqref{eq:min_surf}, but also the sharp, $BV$ variant (c.f. \cite{BMC09} for an overview on different models for this). In a sense \eqref{eq:min_surf} can be viewed as a \emph{selection criterion} for certain microstructures that arise in \eqref{eq:min} (and \eqref{eq:m_well}); the additional surface term in \eqref{eq:min_surf} distinguishing between wild, rough and highly oscillatory (approximate) solutions in \eqref{eq:min} and (approximate) solutions to \eqref{eq:min}, which are more regular (and hence use less surface energy). The properties of minimisers are often analysed by studying scaling limits of the functional \eqref{eq:min_surf}, c.f. for instance \cite{C1, KM1, CO, CO1, R16b, C1, CC15, KO}. While not predicting the exact form of the selected minimisers in \eqref{eq:min}, scaling does yield important information on these.
Recently, a more precise limiting analysis has been carried out in \cite{TS}. In spite of the various impressive advances in this direction, many points remain open in this context.

\subsection{The main results}
The purpose of this note is two-fold and addresses both rigidity and flexibility properties of the underlying differential inclusions: Firstly, we show that scaling laws complement the analysis of convex integration solutions in a precise sense, in that the presence of a scaling law (or rather a lower bound on the energy \eqref{eq:min_surf}) yields an upper bound on the maximal regularity of convex integration solutions (which might be well-known to experts in the field, but which we could not find in the literature). Here we exploit that, while \eqref{eq:m_well} itself does not have a prescribed scale predicting a possibly critical regularity, the model \eqref{eq:min_surf} has.

\begin{thm}
\label{thm:lower_bd}
Let $\Omega \subset \R^n$ be an open bounded $C^{1,1}$ domain. Let $E_{\epsilon}$ be the energy
\begin{align}
\label{eq:Eeps}
E_{\epsilon}
= \min\limits_{\nabla u = M \text{ a.e. in } \R^{n} \setminus \overline{\Omega}} \left\{ \int\limits_{\Omega}\dist^2(\nabla u, K) dx 
+ \epsilon^2 \int\limits_{\Omega}|\nabla^2 u|^2 dx  \right\},
\end{align}
where $K=K(\theta)$ for some fixed $\theta \in \R_+$ is as in \eqref{eq:wells}.
Assume that there exist constants $C>1$ and $\mu \in (0,\frac{1}{2})$ such that for all $\epsilon \in (0,\epsilon_0)$
\begin{align*}
  \epsilon^{2\mu} \leq C E_{\epsilon}.
\end{align*} 
Suppose that $u$ is a solution to 
\begin{align}
\label{eq:conv_int}
\begin{split}
\nabla u &\in K \mbox{ a.e. in } \Omega,\\
\nabla u &= M \mbox{ a.e. in } \R^n \setminus \overline{\Omega}.
\end{split}
\end{align}
If $v(x):=u(x)-Mx-b \in H^{1+s}(\R^n)$ for some $b\in \R^{n}$, $s\in \R$ with $\supp(v)\subset \overline{\Omega}$ and $\nabla v \in L^{\infty}(\R^n)$, then $s\leq \mu$.
\end{thm}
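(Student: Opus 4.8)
The plan is to contradict the hypothesised lower bound $\epsilon^{2\mu} \le C E_\epsilon$ by using the convex integration solution $u$ to build a competitor for the regularised energy $E_\epsilon$ whose energy decays faster than $\epsilon^{2\mu}$. Since $u$ itself is a solution of \eqref{eq:conv_int}, the elastic term $\int_\Omega \dist^2(\nabla u, K)\,dx$ vanishes, but $u$ need not lie in $H^2$, so it is not directly an admissible competitor. The natural fix is to mollify: set $u_\delta := u * \rho_\delta$ for a standard mollifier at scale $\delta$, and use $u_\delta$ (suitably corrected near $\partial\Omega$ so that the boundary condition $\nabla u_\delta = M$ outside $\overline\Omega$ is exactly met) as the competitor.

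The two terms then need to be estimated in $\delta$. For the surface term, since $v = u - Mx - b \in H^{1+s}$ with $s\in\R$, standard mollification estimates give $\|\nabla^2 u_\delta\|_{L^2}^2 = \|\nabla^2 v_\delta\|_{L^2}^2 \lesssim \delta^{2(s-1)}\|v\|_{H^{1+s}}^2$ when $s<1$ (and if $s\ge 1$ the conclusion $s\le\mu$ would already be false only if $\mu\ge 1$, which is excluded since $\mu<\tfrac12$, so in fact one should argue that $s\ge 1$ is impossible separately, or simply note $s<1$ may be assumed WLOG because otherwise take a slightly smaller $s$). For the elastic term, we use $\nabla v \in L^\infty$: writing $\dist(\nabla u_\delta, K) \le \dist(\nabla u_\delta, K) - \dist(\nabla u, K)$ pointwise (the second term being zero a.e.), Lipschitz-continuity of $\dist(\cdot,K)$ gives $\dist(\nabla u_\delta, K) \le |\nabla u_\delta - \nabla u|$ a.e., hence $\int_\Omega \dist^2(\nabla u_\delta, K)\,dx \le \|\nabla u_\delta - \nabla u\|_{L^2(\Omega)}^2$. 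Since $\nabla v \in L^\infty$ and $\nabla v \in H^s$, one has $\|\nabla v_\delta - \nabla v\|_{L^2}^2 \lesssim \delta^{2s}\|\nabla v\|_{H^s}^2$ for $s \in (0,1)$, giving an elastic contribution of order $\delta^{2s}$. Combining, $E_\epsilon \lesssim \delta^{2s} + \epsilon^2 \delta^{2(s-1)}$; optimising in $\delta$ by balancing the two terms yields $\delta \sim \epsilon$ and $E_\epsilon \lesssim \epsilon^{2s}$. Plugging into $\epsilon^{2\mu}\le C E_\epsilon \lesssim \epsilon^{2s}$ and letting $\epsilon \to 0$ forces $2\mu \ge 2s$, i.e. $s \le \mu$, which is the claim.

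The main technical obstacle is the boundary correction: $u_\delta$ does not exactly satisfy $\nabla u_\delta = M$ on $\R^n\setminus\overline\Omega$, because mollification smears the support of $v$ into a $\delta$-neighbourhood of $\partial\Omega$. Since $\supp(v)\subset\overline\Omega$, the support of $v_\delta$ is contained in the $\delta$-neighbourhood $\Omega_\delta := \{x : \dist(x,\Omega)<\delta\}$, which is slightly larger than $\Omega$. Using the $C^{1,1}$ regularity of $\partial\Omega$, this collar has measure $O(\delta)$, and one has the uniform bounds $\|\nabla v_\delta\|_{L^\infty} \lesssim \|\nabla v\|_{L^\infty}$ and $\|\nabla^2 v_\delta\|_{L^\infty}\lesssim \delta^{-1}\|\nabla v\|_{L^\infty}$. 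One option is to cut off $v_\delta$ by a smooth function $\chi_\delta$ equal to $1$ on $\{x: \dist(x,\partial\Omega)>\delta\}\cap\Omega$ and supported in $\Omega$, with $|\nabla\chi_\delta|\lesssim\delta^{-1}$, $|\nabla^2\chi_\delta|\lesssim\delta^{-2}$; then $\tilde u_\delta := Mx+b+\chi_\delta v_\delta$ is admissible. The extra terms produced on the collar $\Omega\cap\{\dist(x,\partial\Omega)<\delta\}$ of measure $O(\delta)$ contribute to the surface energy at most $\epsilon^2 \cdot \delta \cdot (\delta^{-2}\|v_\delta\|_{L^\infty}^2 + \delta^{-2}\cdot 1 + \ldots)$; here one must use that near $\partial\Omega$ not only $\nabla v=0$ outside but also (by Sobolev trace / Poincaré on the collar, using $v$ vanishes outside $\Omega$ and $\nabla v\in L^\infty$) that $|v_\delta| \lesssim \delta\|\nabla v\|_{L^\infty}$ there, so these boundary contributions are of order $\epsilon^2\delta^{-1}$ at worst — which after the choice $\delta\sim\epsilon$ is $O(\epsilon)$, hence negligible compared to $\epsilon^{2s}$ provided $s<\tfrac12$, consistent with $\mu<\tfrac12$. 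Carefully bookkeeping these collar terms so that they do not dominate is the one place where the argument requires genuine care; everything else is routine mollification and optimisation.
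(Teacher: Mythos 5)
Your proposal is correct and follows essentially the same route as the paper: regularise the convex integration solution at a single length scale, correct it with a spatial cutoff supported inside $\Omega$ so that it is an admissible competitor for $E_{\epsilon}$, bound the elastic term by $C\delta^{2s}$ and the surface term by $C\epsilon^2\delta^{2s-2}$ plus $O(\epsilon^2\delta^{-1})$ collar errors, choose $\delta \sim \epsilon$, and play the resulting bound $E_{\epsilon}\leq C(\epsilon^{2s}+\epsilon)$ against the assumed lower bound $\epsilon^{2\mu}\leq C E_{\epsilon}$, using $2\mu<1$ to absorb the $O(\epsilon)$ terms and conclude $s\leq\mu$. The only differences are implementational: the paper truncates frequencies with a Littlewood--Paley projector $P_{<N}$, $N=\delta^{-1}$, rather than a mollifier, and controls the boundary-layer terms with the fractional Poincar\'e inequality of its appendix, whereas your bound $|v|\lesssim \delta\|\nabla v\|_{L^{\infty}}$ on the collar (valid since $v\equiv 0$ outside $\overline{\Omega}$ and $\nabla v\in L^{\infty}$) is an admissible, slightly more elementary substitute yielding errors of the same order.
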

Loosely speaking this implies that for $\nabla u$ on an $H^{s}_{loc}(\R^n)$ scale, any convex integration solution $\nabla u \in L^{\infty}(\Omega)$ can be at most $H^{\mu}_{loc}(\R^n)$ regular.

We remark that the bound $\mu \in (0,\frac{1}{2})$ is related to the presence of trace estimates (c.f. the proof of Theorem \ref{thm:lower_bd} below).

Similar results can be obtained by using different forms of surface energies (c.f. Propositions \ref{prop:scaling_BV} and \ref{prop:scaling_p} in Section \ref{sec:rigidity_and_scaling}) and the corresponding Sobolev and Besov spaces. \\

Secondly, in addition to providing this upper bound on the maximal regularity of convex integration solutions in terms of the behaviour of scaling laws, we present numerical implementations of the convex integration schemes discussed in \cite{RZZ16} and \cite{RZZ17} in the case of the model setting of the geometrically linearised hexagonal-to-rhombic phase transformation (c.f. Section \ref{sec:hex_rhombic} for a more detailed description of this phase transformation). 

\subsection{Outline of the article}
The remainder of the article is organised as follows: First, in Section \ref{sec:rigidity_and_scaling}, we address the relation between scaling laws and rigidity results. In particular, we present the proof of Theorem \ref{thm:lower_bd} (and discuss variants of it). Next, in Section \ref{sec:flex}, we recall the convex integration algorithms from \cite{RZZ16} and \cite{RZZ17} for the model case of the hexagonal-to-rhombic phase transformation. Numerical implementations of these are presented in Section \ref{sec:numerics}, where we also compare the main features of the two algorithms. Finally, in Section \ref{sec:discuss} we summarise our findings and discuss interesting questions and models related to these. In the Appendix, we provide a proof of a fractional Poincar\'e inequality in the form in which we use it in Section \ref{sec:rigidity_and_scaling}.

\subsection*{Acknowledgements} 
The research of Jamie M Taylor leading
to these results has received funding from the European Research Council under the European Union’s Seventh Framework Programme (FP7/2007-2013)/ERC grant agreement no 291053. Christian Zillinger acknowledges the support of an AMS-Simons Travel Grant.

\section{Rigidity and Scaling Laws}
\label{sec:rigidity_and_scaling}

In this section we present the derivation of the bound on the maximal possible regularity of convex integration solutions. Here we do not yet specify the set-up to that of a particular phase transformation but remain in the general framework, which was layed out in Section \ref{sec:intro}.

We explain the close relation between scaling laws and the possible regularity of convex integration solutions (which is reminiscent of viscosity approximations, c.f. \cite{F95,S} for these ideas in the context of fluid mechanics):
Assume that for a bounded Lipschitz domain $\Omega \subset \R^n$ a function $u \in W^{1,\infty}(\Omega)$, which in addition satisfies $ \nabla u \in H^{\gamma}_{loc}(\R^n)$, is a solution to the differential inclusion
\begin{align*}
\nabla u &\in K \mbox{ a.e. in } \Omega,\\
\nabla u &= M \mbox{ a.e. in } \R^n \setminus \overline{\Omega},
\end{align*}
where $K:=K(\theta)$ denotes the wells from \eqref{eq:wells} for some fixed temperature $\theta>0$ (this corresponds to the case of affine boundary data in \eqref{eq:m_well}, which is already an interesting case). 
Then, any sequence $\nabla u_k$ in $H^{\gamma}_{loc}(\R^n)$ with $\nabla u_k \rightarrow \nabla u$ in $L^{2}_{loc}(\R^n)$ and with $\nabla u_k = M$ a.e. in $\R^n \setminus \overline{\Omega}$ is an admissible competitor in the minimization problem for $E_{\epsilon}$, where $E_{\epsilon}$ is the energy from \eqref{eq:Eeps}. The lower bound in the scaling law from Theorem \ref{thm:lower_bd}, then yields a bound on the maximal regularity of $\nabla u$.

The special role of the space $H^{\mu}$ (and the $W^{s,p}$ Sobolev  exponent $ps= 2 \mu$) can be guessed if we suppose that the scaling law $E_{\epsilon}(\nabla u) \sim \epsilon^{2\mu}$ holds. Assuming an equipartition of energy and treating the first term in \eqref{eq:Eeps} heuristically as an $L^2$ energy, we obtain that for a minimiser $u_{\epsilon}$ with fixed $\epsilon>0$
\begin{align*}
\|\nabla u_{\epsilon}\|_{L^2}^2 &\sim \epsilon^{2\mu}, \ \
\| \nabla^2 u_{\epsilon} \|_{L^2}^2 \sim \epsilon^{2\mu-2}. 
\end{align*}
Hence if there is a critical function space for $\nabla u$, it should be determined by the requirement that its scaling behaviour is independent of $\epsilon>0$. By interpolation, we thus guess that it is given by $\nabla u \in [ L^{2}, H^{1}]_{\mu} = H^{\mu}$.\\
Similar arguments indicate ``critical" function spaces if the energies in \eqref{eq:Eeps} are not diffuse, but for instance sharp interface models.

This intuition can be made rigorous and directly yields the proof of Theorem \ref{thm:lower_bd} (c.f. Section \ref{sec:thm1}).

\subsection{Function spaces}
\label{sec:FS}

Before proceeding with the discussion of the relation between regularity and scaling, for reference we first collect the function spaces, which we will be using in the sequel.
Denoting the Fourier transform by 
$\F w (k) = \int\limits_{\R^n} e^{i k \cdot x}w(x) dx$, for $s\in \R$ we consider the \emph{$L^2$ based fractional Sobolev spaces}
\begin{align*}
H^{s}(\R^n):=\{u\in \mathcal{D}'(\R^n): \ \|u\|_{H^{s}(\R^n)}:= \|\F^{-1}[(|\xi|^{2} +1)^{s/2}\F u]\|_{L^2(\R^n)}<\infty\}.
\end{align*}

Moreover, we will also use \emph{Besov spaces}. To this end we work with a \emph{Littlewood-Paley decomposition}. Following \cite[Appendix A]{Tao06}, we let $\varphi$ denote a non-negative, radially symmetric bump function supported in $\{k \in \R^d: \ |k|\leq 2\}$, which is equal to one on $\{k \in \R^d: \ |k|\leq 1\}$. Let $N \in 2^{\Z}$, i.e. assume that there exists $j\in \Z$ such that $N=2^{j}$. We then define the \emph{Littlewood-Paley projectors} $P_{<N}, P_{\geq N}, P_N$ to be 
\begin{align}
\label{eq:LP}
\begin{split}
\F(P_{< N} f)(k)&:= \varphi(k/N)\F f(k),\\
\F(P_{\geq N} f)(k)&:= (1-\varphi(k/N))\F f(k),\\
\F(P_{N} f)(k)&:= (\varphi(k/N)- \varphi(2k/N))\F f(k).
\end{split}
\end{align}
With this at hand, we
recall that for $s\in \R$ and $p\in [1,\infty)$ the space of \emph{Besov functions} $f \in B^{s}_{p,p}(\R^n)$ is defined as
\begin{align*}
B^{s}_{p,p}(\R^n)=\{f\in \mathcal{D}'(\R^n): \ \|f\|_{B^s_{p,p}(\R^n)}<\infty \},
\end{align*}
where
\begin{align*}
\|f\|_{B^{s}_{p,p}(\R^n)}:= \left( \sum\limits_{N\in 2^{\Z}} N^{sp} \|P_N f\|_{L^p(\R^n)}^{p} \right)^{1/p},
\end{align*}
c.f. for instance \cite{Triebel} or \cite{BCD}.
For later use, we recall that $B_{1,1}^0(\R^n) \subset L^1(\R^n)$ (c.f. Theorem 2.41 in \cite{BCD}).
For $s> 0, \ s\notin \N$ this space coincides with the fractional Sobolev space $W^{s,p}(\R^n)$, which is defined by the norm
\begin{align*}
\|u\|_{W^{s,p}(\R^n)}^p = \|u\|_{L^p(\R^n)}^p + \|\nabla^k u\|_{L^p(\R^n)}^p + \int\limits_{\R^n}\int\limits_{\R^n} \frac{|u(x)-u(y)|^p}{|x-y|^{n+\sigma p}} dy dx,
\end{align*}
where $s=k+\sigma$, $k\in \N$, $\sigma\in (0,1)$ (c.f. \cite{BM12} for a comparison of several fractional Sobolev type function spaces).

\subsection{Proof of Theorem \ref{thm:lower_bd}}
\label{sec:thm1}

We begin the discussion of the relation between scaling and regularity by presenting the proof of Theorem \ref{thm:lower_bd}.

\begin{proof}[Proof of Theorem \ref{thm:lower_bd}]
We argue by regularization (through a frequency cut-off in the corresponding Littlewood-Paley decomposition) and a spacial cut-off (in order to ensure the validity of the boundary conditions).

Let $u$ be a solution to \eqref{eq:conv_int}. As a preliminary step, to simplify the set-up, we shift the problem \eqref{eq:conv_int} by introducing $v(x)=u(x)-Mx-b$ for $b\in \R^n$, which solves
\begin{align}
\label{eq:conv_inta}
\begin{split}
\nabla v &\in \tilde{K} \mbox{ a.e. in } \Omega,\\
\nabla v &= 0 \mbox{ a.e. in } \R^n \setminus \overline{\Omega},
\end{split}
\end{align}
where $\tilde{K}=K-M:=\{ N-M: \ N \in K\}$. The constant $b\in \R^n$ is chosen such that $v \in H^{s+1}(\R^n)$ and $\supp(v) \subset \overline{\Omega}$. We seek to show that $s\leq \mu$.
Rewritten in terms of the shifted functions $v$, the energy \eqref{eq:Eeps} turns into
\begin{align}
\label{eq:E_new}
\tilde{E}_{\epsilon}= \min\limits_{\nabla v = 0 \text{ a.e. in } \R^n \setminus \overline{\Omega}} \int\limits_{\Omega} \dist^2(\nabla v, \tilde{K}) dx + \epsilon^2 \int\limits_{\Omega} |\nabla^2 v|^2 dx.
\end{align} 
For $P_{<N}, P_{\geq N}$ denoting the standard Littlewood-Paley projector (c.f. the definition in \eqref{eq:LP}), in the sequel, we use the following notation:
\begin{align*}
(\nabla u)_{<N}:= P_{<N}(\nabla u), \
(\nabla u)_{\geq N}:= P_{\geq N}(\nabla u). 
\end{align*}
Given the function $v$, we cut-off its high frequencies and correct the frequency cut-off of $v$ so that it satisfies the desired boundary conditions, i.e. we consider the function
\begin{align*}
\tilde{v}_{N}(x):= \eta_{\delta}(x) v_{<N}(x) ,
\end{align*}
where $\eta_{\delta}$ is a smooth, positive cut-off function which is equal to one in $\Omega_{2\delta}:=\{x\in \Omega: \dist(x,\partial \Omega)>2\delta\}$, which vanishes outside of $\Omega_{\delta}:=\{x\in \Omega: \dist(x,\partial \Omega)>\delta\}$ and which satisfies
\begin{align}
\label{eq:cut-off}
|\nabla \eta_{\delta}| \leq \frac{C}{\delta}, \ |\nabla^2 \eta_{\delta}| \leq \frac{C}{\delta^2}.
\end{align}
The value of the parameter $\delta$ will be determined in the sequel. Moreover, here and in the following arguments, the constant $C>0$ will be generic and might change from line to line, but will always be independent of $\delta, N, \epsilon$.
We observe that for $v $ with $\nabla v \in L^{\infty}(\R^n)\cap H^s(\R^n)$ Bernstein estimates (c.f. the Appendix in \cite{Tao06}, and Section 2.1.1. in \cite{BCD}) yield
\begin{align}
\label{eq:Bern_a}
\begin{split}
\|\nabla^2 v_{<N}\|_{L^2(\Omega)}
&\leq  \|\nabla^2 v_{<N}\|_{L^2(\R^n)}
\leq  N^{1-s} \|\nabla v\|_{H^{s}(\R^n)},\\
\|(\nabla v)_{\geq N}\|_{L^2(\Omega)} 
&\leq  \|(\nabla v)_{\geq N}\|_{L^2(\R^n)} 
\leq N^{-s} \|\nabla v\|_{H^{s}(\R^n)}.
\end{split}
\end{align}

We now argue in two steps and first estimate the size of $\|\dist(\nabla \tilde{v}_{N}, \tilde{K})\|_{L^2(\Omega)}^2$. We show that this quantity is controlled by $C\epsilon^{2s}$ for some uniform constant $C>1$. Afterwards we prove a similar bound for $\epsilon^2\|\nabla^2 \tilde{v}_{N}\|_{L^2(\Omega)}^2$.  From this we will derive a bound on the possible regularity of $\nabla v$ (and hence for $\nabla u$).\\

\emph{Step 1: Estimate for $\|\dist(\nabla \tilde{v}_{N},\tilde{K})\|_{L^2(\Omega)}^2$.}
Using the triangle inequality, we estimate:
\begin{align}
\label{eq:main}
\begin{split}
\|\dist( \nabla \tilde{v}_{N}, \tilde{K}) \|_{L^2(\Omega)}^2
&\leq C\left(\| \dist(\eta_{\delta} \nabla v_{<N}, \tilde{K}) \|_{L^2(\Omega)}^2 + \|v_{<N} \nabla \eta_{\delta}\|_{L^2(\Omega)}^2\right) \\
&\leq C\left(\|\dist(\nabla v, \tilde{K}) \|_{L^2(\Omega)}^2 
+ \|\dist((1-\eta_{\delta})\nabla v, \tilde{K})\|_{L^2(\Omega)}^2 \right.\\
& \quad \left. + \|\nabla v - \nabla v_{<N}\|_{L^2(\Omega)}^2 
+ \|v_{<N} \nabla \eta_{\delta}\|_{L^2(\Omega)}^2 
\right)\\
&\leq C\left(\delta+ \|v_{<N} \nabla \eta_{\delta}\|_{L^2(\R^n)}^2 
+ \|\nabla v - \nabla v_{<N}\|_{L^2(\R^n)}^2
\right).
\end{split}
\end{align}
Here we used that $v$ is a solution to \eqref{eq:conv_inta} and that since $\nabla v \in L^{\infty}(\Omega)$ and since $\tilde{K}$ is bounded, the support condition for $\eta_{\delta}$ implies
\begin{align*}
\int\limits_{\Omega}\dist^2((1-\eta_{\delta})\nabla v, \tilde{K}) dx \leq C \delta.
\end{align*}
We estimate the two terms on the right hand side of \eqref{eq:main} separately: First, we apply the Bernstein estimates from \eqref{eq:Bern_a} to infer
\begin{align}
\label{eq:aux_a}
 \|\nabla v - \nabla v_{<N}\|_{L^2(\R^n)}^2
 = \|\nabla v_{\geq N}\|_{L^2(\R^n)}^2
 \leq C N^{-2s}\|\nabla v\|_{H^{s}(\R^n)}^2.
\end{align}
Next, we estimate
\begin{align}
\label{eq:aux_b}
\|v_{<N} \nabla \eta_{\delta}\|_{L^2(\R^n)}^{2}
\leq C (\|v \nabla \eta_{\delta}\|_{L^2(\R^n)}^2 + \|v_{\geq N}\nabla \eta_{\delta}\|_{L^2(\R^n)}^2).
\end{align}
In order to bound the second term on the right hand side of \eqref{eq:aux_b}, we note that by the bounds for $\eta_{\delta}$ (c.f. \eqref{eq:cut-off}) and Bernstein estimates
\begin{align}
\label{eq:aux_b1}
\begin{split}
\|v_{\geq N} \nabla \eta_{\delta}\|_{L^2(\R^n)}^2
&\leq C\delta^{-2} \|v_{\geq N}\|_{L^2(\R^n)}^2
\leq C \delta^{-2} N^{-2-2s} \|\nabla v\|_{H^{s}(\R^n)}^2.
\end{split}
\end{align}
For the first term on the right hand side of \eqref{eq:aux_b}, using that $\supp(\nabla \eta_{\delta}) \subset \Omega_{\delta} \setminus \Omega_{2\delta}$, we obtain 
\begin{align}
\label{eq:aux_b2}
\|v \nabla \eta_{\delta}\|_{L^2(\R^n)}^2 
\leq C \delta^{-2} \|v\|_{L^2(\Omega_{\delta}\setminus \Omega_{2\delta})}^2
\leq C  \|\nabla v\|_{L^2(\Omega \setminus \Omega_{2\delta})}^2
\leq C \delta^{2s} \|\nabla v\|_{H^{s}(\R^n)}^2.
\end{align}
Indeed, the last estimate in this chain of inequalities can be inferred by using the support assumption for $\nabla v$ and a fractional Poincar\'e inequality (c.f. Lemma \ref{lem:frac_poinc_L2} in Section \ref{sec:append}).

Combining the bounds from \eqref{eq:aux_a}-\eqref{eq:aux_b2}, we thus obtain as the estimate for the elastic energy
\begin{align*}
\|\dist(\nabla \tilde{v}_{N}, \tilde{K})\|_{L^2(\Omega)}^2
\leq C (N^{-2s} + \delta^{-2}N^{-2-2s} + \delta^{2s})\|\nabla v\|_{H^{s}(\Omega)}^2 + C \delta.
\end{align*}
Choosing $N = \delta^{-1}= \epsilon^{-1}$, hence implies the bound
\begin{align}
\label{eq:est_wells}
\|\dist(\nabla \tilde{v}_{N}, \tilde{K})\|_{L^2(\Omega)}^2
\leq C (\epsilon^{2s} + \epsilon).
\end{align}

\emph{Step 2: Conclusion.}
Next we estimate the surface energy:
\begin{align}
\label{eq:surf_a}
\begin{split}
\|\nabla^2 \tilde{v}_{N}\|_{L^2(\Omega)}^2
&\leq C (\|\nabla^2 v_{<N}\|_{L^2(\R^n)}^2 + \|v_{<N}\nabla^2 \eta_{\delta}\|_{L^2(\R^n)}^2 + \|\nabla \eta_{\delta} \nabla v_{<N}\|_{L^2(\R^n)}^2)\\
&\leq C(N^{2-2s}\|\nabla v\|_{H^{s}(\R^n)}^2 + \|v_{<N}\nabla^2 \eta_{\delta}\|_{L^2(\R^n)}^2 + \|\nabla \eta_{\delta} \nabla v_{<N}\|_{L^2(\R^n)}^2).
\end{split}
\end{align}
We again estimate the two last terms separately: Using a fractional Poincar\'e inequality once more in combination with Bernstein estimates, we obtain for the third term in \eqref{eq:surf_a}
\begin{align}
\label{eq:surf_b}
\begin{split}
\|\nabla \eta_{\delta} \nabla v_{<N}\|_{L^2(\R^n)}^2
&\leq C (\|\nabla \eta_{\delta} \nabla v\|_{L^2(\Omega)}^2 + \|\nabla \eta_{\delta} \nabla v_{\geq N}\|_{L^2(\Omega)}^2)\\
&\leq C \delta^{-2} \delta^{2s} \|\nabla v\|_{H^{s}(\R^n)}^2 + C \delta^{-2}N^{-2s} \|\nabla v\|_{H^{s}(\R^n)}^2.
\end{split}
\end{align}
For the second term in \eqref{eq:surf_a}, we similarly note by Poincar\'e and Bernstein
\begin{align}
\label{eq:surf_c}
\begin{split}
\|v_{<N} \nabla^2 \eta_{\delta}\|_{L^2(\Omega)}^2
&\leq C \|v\nabla^2 \eta_{\delta} \|_{L^2(\R^n)}^2 
+ C \|v_{\geq N} \nabla^2 \eta_{\delta}\|_{L^2(\R^n)}^2\\
& \leq C \|\nabla v\|_{H^{s}(\R^n)}^2 (\delta^{-2+2s} + \delta^{-4}N^{-2-2s}).
\end{split}
\end{align}
Combining \eqref{eq:surf_a}-\eqref{eq:surf_c} we therefore obtain
\begin{align}
\label{eq:surf_d}
\epsilon^2\|\nabla^2 \tilde{v}_N\|_{L^2(\Omega)}^2
\leq C\epsilon^2(N^{2-2s}+\delta^{-2+2s}+\delta^{-4}N^{-2-2s} + \delta^{-2}N^{-2s})\|\nabla v\|_{H^{s}(\R^n)}^2.
\end{align}
Finally, combining \eqref{eq:surf_d} with \eqref{eq:est_wells} and choosing $N^{-1}=\delta = \epsilon$, we conclude
\begin{align}
\label{eq:concl_a}
\epsilon^{2\mu} \leq C \|\dist(\nabla \tilde{v}_{N}, \tilde{K})\|^2_{L^2(\Omega)} + C \epsilon^2 \|\nabla^2 \tilde{v}_N\|_{L^2(\Omega)}^2 \leq C \epsilon^{2s} \|\nabla v\|_{H^{s}(\R^n)}^2 + C \epsilon.
\end{align}
As $\|\nabla v\|_{H^{s}(\R^n)}^2$ is bounded independently of $\epsilon$, we observe that, for $\epsilon \in (0,1)$ sufficiently small and $\mu \in (0,\frac{1}{2})$, \eqref{eq:concl_a} can only hold as long as $s \leq \mu$. This concludes the argument.
\end{proof}

\begin{rmk}
\label{rmk:trace}
The restriction $\mu \in (0,1/2)$ enters rather naturally and is connected to the presence/absence of a trace. For most (though not for all, c.f. Section 7 in \cite{RZZ16}) convex integration solutions, one would expect that no trace estimates are available, the prescribed boundary data enforcing too strong oscillations in $\nabla u$ close to the boundary $\partial \Omega$. For the convex integration solutions in \cite{DMP08a} the presence of a trace for instance provides the sharp obstructions for the existence of convex integration solutions.
\end{rmk}

\subsection{Variations on Theorem \ref{thm:lower_bd}}
\label{sec:var}

Instead of working in $L^2$ based function spaces, we can also work with a Littlewood-Paley decomposition and the associated Bernstein estimates in other function spaces, i.e. for different energy functionals.\\

Consider for instance the (sharp interface) energy functional
\begin{align}
\label{eq:Eeps1}
E_{\epsilon,1}
= \min\limits_{\nabla u = M \text{ a.e. in } \R^n \setminus \overline{\Omega}} \left\{ \int\limits_{\Omega}\dist^2(\nabla u, K) dx 
+ \epsilon |\nabla^2 u|(\Omega)\right\},
\end{align}
where $|\nabla^2 u|(\Omega)$ denotes the variation measure of the distributional derivative of $\nabla u$.

As in Theorem \ref{thm:lower_bd} we obtain the following bound for possible convex integration solutions:

\begin{prop}
\label{prop:scaling_BV}
Let $E_{\epsilon,1}$ be the energy from \eqref{eq:Eeps1}. Assume that there exist constants $C>1$ and $\mu \in (0,\frac{1}{2})$ such that for all $\epsilon \in (0,\epsilon_0)$
\begin{align*}
  \epsilon^{2\mu} \leq C E_{\epsilon,1}.
\end{align*} 
Suppose that $u$ is a solution to \eqref{eq:conv_int}.
If for some $b \in \R^n$ and $s\in \R$ we have
$v(x)=u(x)-Mx-b \in B_{1,1}^{1+s}(\R^n)$, $\supp(v)\subset \overline{\Omega}$ and $\nabla v \in L^{\infty}(\R^n)$, then $s\leq 2\mu$.
\end{prop}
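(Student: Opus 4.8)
The plan is to follow the proof of Theorem~\ref{thm:lower_bd} almost verbatim, with the $L^2$-based Littlewood--Paley/Bernstein estimates replaced by their $L^1$-analogues (the natural ones on the Besov scale $B^s_{1,1}$) and with the surface term read as $|\nabla^2\tilde v_N|(\Omega)=\|\nabla^2\tilde v_N\|_{L^1(\Omega)}$, which is legitimate because the regularised competitor $\tilde v_N$ is smooth. As in that proof I would first pass to the shifted problem \eqref{eq:conv_inta}: choosing $b$ so that $v=u-Mx-b$ has $\supp(v)\subset\overline\Omega$ and $v\in B^{1+s}_{1,1}(\R^n)$, hence (as $\nabla$ maps $B^{1+s}_{1,1}$ into $B^s_{1,1}$) $\nabla v\in B^s_{1,1}(\R^n)\cap L^\infty(\R^n)$ compactly supported in $\overline\Omega$, it suffices to prove $s\le2\mu$; one may moreover assume $s\in(0,1)$, since if $s\ge1$ then $\nabla v\in B^{s'}_{1,1}$ for any $s'\in(2\mu,1)$ and the estimate below applied to such an $s'$ already gives a contradiction. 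I would use the same competitor $\tilde v_N=\eta_\delta v_{<N}$, with the cut-off $\eta_\delta$ from \eqref{eq:cut-off}, and at the end set $N=\delta^{-1}=\epsilon^{-1}$; since $\tilde v_N$ is supported compactly inside $\Omega$, the map $x\mapsto Mx+b+\tilde v_N(x)$ is admissible in the minimisation defining $E_{\epsilon,1}$.

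The inputs I would need are: the $L^1$-Bernstein estimates $\|\nabla^2 v_{<N}\|_{L^1}\le CN^{1-s}\|\nabla v\|_{B^s_{1,1}}$, $\|\nabla v_{\ge N}\|_{L^1}\le CN^{-s}\|\nabla v\|_{B^s_{1,1}}$ and $\|v_{\ge N}\|_{L^1}\le CN^{-1-s}\|\nabla v\|_{B^s_{1,1}}$ (dyadic summation from $\|\nabla P_M f\|_{L^1}\le CM\|P_M f\|_{L^1}$ and $\|P_M v\|_{L^1}\le CM^{-1}\|P_M\nabla v\|_{L^1}$, using $B^0_{1,1}\subset L^1$); the $L^\infty$-bounds $\|v_{<N}\|_{L^\infty}+\|\nabla v_{<N}\|_{L^\infty}\le C\|\nabla v\|_{L^\infty}$ and $\|v_{\ge N}\|_{L^\infty}\le CN^{-1}\|\nabla v\|_{L^\infty}$ (the latter by dyadic summation of $\|P_M v\|_{L^\infty}\le CM^{-1}\|\nabla v\|_{L^\infty}$); and --- to cope with the fact that the elastic energy is $L^2$-based while the regularity hypothesis lives on the $L^1$-Besov scale --- the elementary per-block interpolation $\|P_M f\|_{L^2}^2\le\|P_M f\|_{L^1}\|P_M f\|_{L^\infty}$, which combined with $\nabla v\in L^\infty$ gives the single $L^2$-decay estimate I actually need, $\|\nabla v_{\ge N}\|_{L^2}^2\le CN^{-s}\|\nabla v\|_{L^\infty}\|\nabla v\|_{B^s_{1,1}}$. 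I would also use, as in Theorem~\ref{thm:lower_bd}, that $v$ is Lipschitz and vanishes outside $\overline\Omega$, so $|v|\le\|\nabla v\|_{L^\infty}\dist(\cdot,\partial\Omega)$, and that the boundary layer $\supp(\nabla\eta_\delta)\subset\Omega_\delta\setminus\Omega_{2\delta}$ has Lebesgue measure $O(\delta)$.

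With these at hand the computation runs parallel to Steps~1 and 2 of the proof of Theorem~\ref{thm:lower_bd}. For the elastic term the same pointwise bound as in \eqref{eq:main}, $\dist(\nabla\tilde v_N,\tilde K)\le|(1-\eta_\delta)\nabla v|+|\nabla v-\nabla v_{<N}|+|v_{<N}\nabla\eta_\delta|$ a.e.\ in $\Omega$, reduces $\|\dist(\nabla\tilde v_N,\tilde K)\|_{L^2(\Omega)}^2$ to a boundary-layer term $\le C\|\nabla v\|_{L^\infty}^2\delta$, the high-frequency term $\|\nabla v_{\ge N}\|_{L^2}^2\le CN^{-s}$, and $\|v_{<N}\nabla\eta_\delta\|_{L^2}^2$, which --- splitting $v_{<N}=v-v_{\ge N}$ and using $|v|\le C\delta$ on $\supp(\nabla\eta_\delta)$, $\|v_{\ge N}\|_{L^\infty}\le CN^{-1}$ and $\|\nabla\eta_\delta\|_{L^\infty}^2|\supp(\nabla\eta_\delta)|\le C\delta^{-1}$ --- is $\le C(\delta+N^{-2}\delta^{-1})$. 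For the surface term, $|\nabla^2\tilde v_N|(\Omega)=\|\nabla^2\tilde v_N\|_{L^1(\Omega)}\le C\big(\|\nabla^2 v_{<N}\|_{L^1}+\|\nabla\eta_\delta\otimes\nabla v_{<N}\|_{L^1}+\|v_{<N}\nabla^2\eta_\delta\|_{L^1}\big)$, where the first term is $\le CN^{1-s}$ by Bernstein and the last two, treated by the same splitting (now also $\nabla v_{<N}=\nabla v-\nabla v_{\ge N}$) together with the boundary-layer bounds, are $\le C(1+\delta^{-1}N^{-s})$. Collecting everything and inserting $N=\delta^{-1}=\epsilon^{-1}$,
\[
E_{\epsilon,1}\le C\|\dist(\nabla\tilde v_N,\tilde K)\|_{L^2(\Omega)}^2+C\epsilon\,|\nabla^2\tilde v_N|(\Omega)\le C\epsilon^{s}+C\epsilon ,
\]
with $C$ depending on $\|\nabla v\|_{L^\infty}$ and $\|\nabla v\|_{B^s_{1,1}}$ but not on $\epsilon$.

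Combining this with the assumed lower bound $\epsilon^{2\mu}\le CE_{\epsilon,1}$ gives $\epsilon^{2\mu}\le C\epsilon^s+C\epsilon$ for all small $\epsilon$; since $\mu<\tfrac12$ the term $C\epsilon$ is $o(\epsilon^{2\mu})$ as $\epsilon\to0$, so the inequality can persist only if $s\le2\mu$, which is the claim. The step I expect to be the real obstacle --- and the only substantive departure from the proof of Theorem~\ref{thm:lower_bd} --- is the mismatch between the $L^2$-based elastic energy and the $L^1$-based ($B^s_{1,1}$) regularity hypothesis: the per-block interpolation with $\nabla v\in L^\infty$ handles it, but turns the high-frequency gain $N^{-2s}$ of \eqref{eq:aux_a} into only $N^{-s}$, and this --- together with the surface penalisation being linear rather than quadratic in $\epsilon$ --- is exactly what moves the threshold from $\mu$ to $2\mu$. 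As in Remark~\ref{rmk:trace}, the restriction $\mu\in(0,\tfrac12)$ is precisely what makes the $O(\epsilon)$ boundary-layer error harmless.
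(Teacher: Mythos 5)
Your proposal is correct and follows essentially the same route as the paper's proof: the same competitor $\tilde v_N=\eta_\delta v_{<N}$ with $N=\delta^{-1}=\epsilon^{-1}$, $L^1$-Bernstein estimates on the Besov scale, and interpolation of the $L^2$ elastic term against the $L^\infty$ bound, leading to $E_{\epsilon,1}\le C(\epsilon^{s}+\epsilon)$ and hence $s\le 2\mu$. The only departures are technical simplifications: you replace the paper's use of the fractional Poincar\'e inequality and its $L^p$-Bernstein detour (which produces the $N^{\nu/2}$, $\epsilon^{-\nu}$ losses) by the elementary Lipschitz/boundary-layer bound $|v|\le \|\nabla v\|_{L^\infty}\dist(\cdot,\partial\Omega)$ and the uniform $L^\infty$-boundedness of the Littlewood--Paley projectors, and you make explicit the harmless reduction to $s\in(0,1)$.
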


Here $B^{1+s}_{1,1}(\R^n)$ denotes the standard Besov space (defined for instance through a suitable Littlewood-Paley decomposition, c.f. Section \ref{sec:FS} and \cite{BCD}). Loosely speaking the proposition states, that on a Besov scale, we can at most have $\nabla u \in (B^{2\mu}_{1,1})_{loc}(\R^n)$.

\begin{proof}
As a first preliminary step, we again switch to the situation where
\begin{align*}
v(x) = u(x)-Mx-b,
\end{align*}
and $b\in \R^n$ is chosen such that $v\in B_{1,1}^{1+s}(\R^n)$ and $\supp(v)\subset \overline{\Omega}$.
The modified energy wells turn into $\tilde{K}:= K-M$, and the energy \eqref{eq:Eeps1} becomes 
\begin{align}
\label{eq:tEps1}
\tilde{E}_{\epsilon,1}:= \min\limits_{\nabla v = 0 \text{ a.e. in } \R^n \setminus \overline{\Omega}} \left\{ \|\dist(\nabla v, \tilde{K})\|_{L^2(\Omega)}^2 + \epsilon |\nabla^2 v|(\Omega) \right\}.
\end{align}

Given a convex integration solution $u$ of \eqref{eq:conv_int} or $v$ of \eqref{eq:conv_inta}, we again consider the functions
\begin{align*}
\tilde{v}_{N}(x):= \eta_{\delta}(x) v_{<N}(x) ,
\end{align*}
where $\eta_{\delta}$ is a smooth, positive cut-off function which is equal to one in $\Omega_{2\delta}:=\{x\in \Omega: \dist(x,\partial \Omega)>2\delta\}$, which vanishes outside of $\Omega_{\delta}:=\{x\in \Omega: \dist(x,\partial \Omega)>\delta\}$ and which satisfies
\begin{align}
\label{eq:cut-off1}
|\nabla \eta_{\delta}| \leq \frac{C}{\delta}, \ |\nabla^2 \eta_{\delta}| \leq \frac{C}{\delta^2}.
\end{align}
The value of the parameter $\delta$ will be determined in the sequel. 
Using Bernstein type inequalities \cite{BCD} and the embeddings $B_{1,1}^{0}(\R^n)\subset L^{1}(\R^n)$ (c.f. Theorem 2.41 in \cite{BCD}) and $W^{1,1}(\Omega) \subset BV(\Omega)$, 
for $v $ with $\nabla v \in L^{\infty}(\R^n)\cap B^s_{1,1}(\R^n)$ we deduce
\begin{align}
\label{eq:Bern}
\begin{split}
|\nabla (\nabla v)_{<N}|(\Omega)
&\leq  \|(\nabla v)_{<N}\|_{\dot{W}^{1,1}(\R^n)} 
\leq  C\|(\nabla^2 v)_{<N}\|_{B^{0}_{1,1}(\R^n)} 
\leq  CN^{1-s} \|\nabla v\|_{B^{s}_{1,1}(\R^n)},\\
\|(\nabla v)_{\geq N}\|_{L^1(\Omega)} 
&\leq  \|(\nabla v)_{\geq N}\|_{L^1(\R^n)}
\leq C\|(\nabla v)_{\geq N}\|_{B^{0}_{1,1}(\R^n)} 
\leq CN^{-s} \|\nabla v\|_{B^{s}_{1,1}(\R^n)}.
\end{split}
\end{align}
With this at hand, using that
\begin{align*}
\nabla \tilde{v}_{N} = \eta_{\delta} \nabla v_{<N} + v_{<N} \nabla \eta_{\delta},
\end{align*}
and
using the properties of the cut-off $\eta_{\delta}$ (c.f. \eqref{eq:cut-off1}), we estimate as follows:
\begin{align}
\label{eq:L1_grad1}
\begin{split}
&\|\dist(\nabla \tilde{v}_{N}, \tilde{K})\|_{L^2(\Omega)}^2
\leq 2\|\dist(\eta_{\delta} \nabla v_{<N}, \tilde{K})\|_{L^2(\Omega)}^2 
+ 2\|v_{<N} \nabla \eta_{\delta}\|_{L^2(\Omega)}^2\\
&\leq 2\|\dist(\eta_{\delta}  \nabla v_{<N}, \tilde{K})\|_{L^2(\Omega)}^2 
+ 2\|v_{<N} \nabla \eta_{\delta}\|_{L^{\infty}(\R^n)}\|v_{<N} \nabla \eta_{\delta}\|_{L^{1}(\R^n)}\\
&\leq 2\|\dist(\eta_{\delta}  \nabla v_{<N}, \tilde{K})\|_{L^2(\Omega)}^2 
+ 2C_{\nu} \left( \frac{1}{\delta N^{2-\nu}}+\delta\right)\|\nabla v\|_{L^{\infty}(\R^n)}^{2},
\end{split}
\end{align}
where $\nu >0$ is an arbitrarily small constant.
In order to infer this, we used the bounds
\begin{align}
\label{eq:aux_1}
\|v_{<N} \nabla \eta_{\delta}\|_{L^{q}(\R^n)} 
& \leq C( \|v \nabla \eta_{\delta}\|_{L^{q}(\R^n)} 
+  \|v_{\geq N} \nabla \eta_{\delta}\|_{L^{q}(\R^n)}
)
\end{align}
for $q\in \{1,\infty\}$. By Bernstein estimates, we control the second term in \eqref{eq:aux_1} for $q=\infty$
\begin{align}
\label{eq:aux_1a}
\begin{split}
 \|v_{\geq N} \nabla \eta_{\delta}\|_{L^{\infty}(\R^n)}&\leq C \delta^{-1} N^{\frac{n}{p}}\|v_{\geq N}\|_{L^p(\R^n)}
\leq C \delta^{-1} N^{\frac{n}{p}} N^{-1} \||\nabla| v_{\geq N}\|_{L^p(\R^n)}\\
&\leq C \delta^{-1} N^{\frac{n}{p}} N^{-1} \|\nabla v_{\geq N}\|_{L^p(\R^n)}
\leq C \delta^{-1} N^{\frac{n}{p}} N^{-1} \|\nabla v\|_{L^p(\R^n)}\\
& \leq C \delta^{-1} N^{-1+\nu/2} \|\nabla v\|_{L^{\infty}(\R^n)},
\end{split}
\end{align}
with $\frac{\nu}{2}:= \frac{n}{p}$ (and $p>1$ can be chosen arbitrarily large),
and for $q=1$
\begin{align}
\label{eq:aux_2}
\begin{split}
&\|v_{\geq N}\nabla \eta_{\delta}\|_{L^1(\R^n)} \leq \|\nabla \eta_{\delta}\|_{L^{\infty}(\R^n)} \|v_{\geq N}\|_{L^{\infty}(\R^n)}|\Omega_{\delta} \setminus \Omega_{2\delta}|\\
& \qquad \leq C \frac{1}{\delta} \frac{1}{N^{1-\nu/2}}\|\nabla v\|_{L^{\infty}(\R^n)}\delta = \frac{C}{N^{1-\nu/2}}\|\nabla v\|_{L^{\infty}(\R^n)}.
\end{split}
\end{align}
The respective first terms on the right hand side of \eqref{eq:aux_1} are controlled by invoking the fundamental theorem
\begin{align*}
\|v \nabla \eta_{\delta}\|_{L^{\infty}(\R^n)} 
&\leq C \delta^{-1}\|v \|_{L^{\infty}(\Omega_{\delta} \setminus \Omega_{2\delta})} \leq C \delta^{-1} \delta  \|\nabla v\|_{L^{\infty}(\Omega\setminus \Omega_{2\delta})} ,\\
\|v \nabla \eta_{\delta}\|_{L^{1}(\R^n)} 
&\leq C \delta^{-1}\|v \|_{L^{1}(\Omega_{\delta} \setminus \Omega_{2\delta})} 
\leq C \delta^{-1} \delta  \|\nabla v\|_{L^{1}(\Omega\setminus \Omega_{2\delta})}
\leq C  \delta  \|\nabla v\|_{L^{\infty}(\Omega\setminus \Omega_{2\delta})} .
\end{align*}
Further, we observe that
\begin{align}
\label{eq:well_en_a}
\begin{split}
&\|\dist(\eta_{\delta} \nabla v_{<N}, \tilde{K})\|_{L^2(\Omega)}^2
\leq C\|\dist(\eta_{\delta} \nabla v_{<N}, \tilde{K})\|_{L^1(\Omega)} (\| \nabla v_{<N}\|_{L^{\infty}(\Omega)}+C_{\tilde{K}})\\
&\leq C\left(\|\dist(\eta_{\delta} \nabla v, \tilde{K})\|_{L^1(\Omega)} 
+ \|\nabla v - \nabla v_{<N}\|_{L^1(\Omega)} \right)
(\| \nabla v_{<N}\|_{L^{\infty}(\R^n)}+C_{\tilde{K}})\\
&\leq C\left(\|\dist( \nabla v, \tilde{K})\|_{L^1(\Omega)} + \|\dist((1-\eta_{\delta}) \nabla v, \tilde{K})\|_{L^1(\Omega)} 
+ \|\nabla v - \nabla v_{<N}\|_{L^1(\R^n)} \right)\\
&\qquad \times (\| \nabla v_{<N}\|_{L^{\infty}(\R^n)}+C_{\tilde{K}})\\
&\leq C\left(\delta
+ N^{-s}\|\nabla v_{\geq N}\|_{B^{s}_{1,1}(\R^n)} \right)(N^{\frac{\nu}{2}}
\| \nabla v\|_{L^{\infty}(\R^n)}+C_{\tilde{K}})\\
&\leq C\left(\delta
+ N^{-s}\|\nabla v\|_{B^{s}_{1,1}(\R^n)} \right)(N^{\frac{\nu}{2}}
\| \nabla v\|_{L^{\infty}(\R^n)}+C_{\tilde{K}}).
\end{split}
\end{align}
Combining \eqref{eq:L1_grad1}-\eqref{eq:well_en_a}, we infer for the elastic energy
\begin{align}
\label{eq:elast_a1}
\begin{split}
\|\dist(\nabla \tilde{v}_N, \tilde{K}) \|_{L^2(\Omega)}^2 
&\leq  C\left(\delta N^{\frac{\nu}{2}}
+ N^{-s+\frac{\nu}{2}}\|\nabla v\|_{B^{s}_{1,1}(\R^n)}+C\delta^{-1}N^{-2+\nu} \right)\\
& \quad \times 
(\| \nabla v\|_{L^{\infty}(\R^n)}+C_{\tilde{K}}) .
\end{split}
\end{align}
For the surface energy we estimate 
\begin{align}
\label{eq:L1_grad_2}
\begin{split}
|\nabla^2 \tilde{v}_{N}|(\Omega)
&\leq \|\nabla^2 v_{<N}\|_{L^1(\Omega)} + 2 \|\nabla \eta_{\delta}  \nabla v_{<N} \|_{L^1(\Omega)} + \| v_{<N} \nabla^2 \eta_{\delta}\|_{L^1(\Omega)}\\
&\leq \|\nabla^2 v_{<N}\|_{L^1(\Omega)} + 2 \|\nabla \eta_{\delta}  \nabla v_{\geq N} \|_{L^1(\R^n)}
+ 2 \|\nabla \eta_{\delta}  \nabla v \|_{L^1(\R^n)}\\
& \quad  + \|v_{\geq N} \nabla^2 \eta_{\delta} \|_{L^1(\R^n)}
+ 2 \|v \nabla^2 \eta_{\delta}  \|_{L^1(\R^n)} 
 \\
&\leq C(\|\nabla^2 v_{<N}\|_{L^1(\Omega)} + \frac{1}{\delta}\|\nabla v_{\geq N}\|_{L^1(\R^n)} + \frac{1}{\delta^2}\| v_{\geq N}\|_{L^1(\R^n)}\\
& \quad + \frac{1}{\delta}\|\nabla v\|_{L^1(\Omega\setminus \Omega_{2\delta})} + \frac{1}{\delta^2}\| v\|_{L^1(\Omega\setminus \Omega_{2\delta})})
\\
&\leq C( N^{1-s}\|\nabla v\|_{B^{s}_{1,1}(\R^n)} + \frac{2}{\delta N^{s}}\|\nabla v\|_{B^{s}_{1,1}(\R^n)} + \frac{1}{\delta^2 N^{1+s}}\| \nabla v \|_{B^{s}_{1,1}(\R^n)}\\
& \quad + \frac{2 \delta^{s}}{\delta}\|\nabla v\|_{B^s_{1,1}(\R^n)} + \frac{\delta^{1+s}}{\delta^2}\| \nabla v\|_{B^{s}_{1,1}(\R^n)}).
\end{split}
\end{align}
Here we have used some of the Bernstein type estimates from \eqref{eq:Bern} in combination with a fractional Poincar\'e inequality (c.f. Section \ref{sec:append}).
Combining \eqref{eq:elast_a1}, \eqref{eq:L1_grad_2} and the assumed lower bound for $E_{\epsilon,1}$ and choosing $N=\delta^{-1}$ yields
\begin{align}
\label{eq:lower_bd}
\begin{split}
\epsilon^{2\mu} &\leq \tilde{E}_{\epsilon,1} 
\leq \|\dist(\nabla\tilde{v}_{N},K)\|_{L^2(\Omega)}^2 + \epsilon |\nabla^2 \tilde{v}_{N}|(\Omega)\\
&\leq C(\nu,\|\nabla v\|_{L^{\infty}(\R^n)}, \|\nabla v\|_{B^{s}_{1,1}(\R^n)}) \left(N^{-1}+ N^{-s+\frac{\nu}{2}} + N^{-1+\nu} + \epsilon N^{1-s}\right).
\end{split}
\end{align}

Choosing $N \sim \epsilon^{-1}$, we obtain
\begin{align*}
\epsilon^{2\mu} &\leq \tilde{E}_{\epsilon,1} 
\leq  C(\nu,\|\nabla v\|_{L^{\infty}(\R^n)}, \|\nabla v\|_{B^{s}_{1,1}(\R^n)})( \epsilon^{s - \frac{\nu}{2}} + \epsilon^{1-\nu}).
\end{align*}
As a consequence, by choosing $\nu$ depending on $\mu$ sufficiently small and considering the limit $\epsilon \rightarrow 0$, we infer that on a Besov scale no ($L^{\infty}$-) convex integration solution can be more than $B^{2\mu}_{1,1}(\R^n)$-regular for $\mu\in (0,1/2)$.
\end{proof}

More generally, the previous situation can be studied with regularisations in the Sobolev spaces $W^{s,p}$. 
In this setting we have the following result:

\begin{prop}
\label{prop:scaling_p}
Let $1< p \leq 2$ and denote by $E_{\epsilon,p}$ the energy 
\begin{align}
\label{eq:Eepsp}
E_{\epsilon,p}
= \min\limits_{\nabla u = M \text{ a.e. in } \R^n \setminus \overline{\Omega} } \left\{ \int\limits_{\Omega}\dist^2(\nabla u, K) dx 
+ \epsilon^p \|\nabla^2 u\|_{L^p(\Omega)}^p \right\}.
\end{align}
Assume that there exists constants $C_p>1$ and $\mu \in (0,1/2)$ such that for all $\epsilon \in (0,\epsilon_0)$
\begin{align*}
  \epsilon^{2\mu} \leq C_p E_{\epsilon,p}.
\end{align*} 
Suppose that $u$ is a solution to \eqref{eq:conv_int}. If for some $b\in \R^n$ and $s\in \R$ it holds that $v(x)=u(x)-Mx-b \in W^{s,p}(\R^n)$ with $\supp(v)\subset \overline{\Omega}$ and $\nabla v \in L^{\infty}(\R^n)$, then $s\leq \frac{2\mu}{p}$.
\end{prop}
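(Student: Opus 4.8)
The plan is to follow the same regularization-and-cutoff strategy as in the proof of Theorem \ref{thm:lower_bd}, but now carrying out all estimates in the $W^{s,p}$ scale with the associated $L^p$-based Bernstein inequalities, and tracking the surface term in the $L^p$-norm of the Hessian rather than the $L^2$-norm. First I would reduce to $v = u - Mx - b$ solving \eqref{eq:conv_inta} with modified wells $\tilde K = K - M$, so that the energy becomes $\tilde E_{\epsilon,p} = \min \{ \|\dist(\nabla v,\tilde K)\|_{L^2(\Omega)}^2 + \epsilon^p \|\nabla^2 v\|_{L^p(\Omega)}^p \}$, and build the competitor $\tilde v_N = \eta_\delta v_{<N}$ with the same cut-off $\eta_\delta$ satisfying \eqref{eq:cut-off}. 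The relevant Bernstein estimates are $\|\nabla^2 v_{<N}\|_{L^p(\R^n)} \leq C N^{1-s}\|\nabla v\|_{W^{s,p}(\R^n)}$ and $\|(\nabla v)_{\geq N}\|_{L^p(\R^n)} \leq C N^{-s}\|\nabla v\|_{W^{s,p}(\R^n)}$, together with $\|v_{\geq N}\|_{L^p} \leq C N^{-1-s}\|\nabla v\|_{W^{s,p}}$.

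For the elastic term, since $\nabla v \in L^\infty$ and $\tilde K$ is bounded, I would bound $\|\dist(\nabla\tilde v_N,\tilde K)\|_{L^2(\Omega)}^2$ exactly as in \eqref{eq:main}: split off the cut-off error $\|v_{<N}\nabla\eta_\delta\|_{L^2}^2$ and the frequency-truncation error $\|\nabla v - \nabla v_{<N}\|_{L^2}^2$, use the inclusion $\nabla v \in \tilde K$ a.e.\ in $\Omega$ and $\int_\Omega \dist^2((1-\eta_\delta)\nabla v,\tilde K)\,dx \leq C\delta$. Since $p \leq 2$ I can pass from $L^p$ to $L^2$ control on the $\delta$-collar $\Omega\setminus\Omega_{2\delta}$ (whose measure is $\sim\delta$) at the cost of a power of $\delta$, and invoke the fractional Poincaré inequality (Lemma \ref{lem:frac_poinc_L2}, in its $W^{s,p}$ form, c.f.\ Section \ref{sec:append}) to get $\|v\nabla\eta_\delta\|_{L^p(\R^n)}^p \leq C\delta^{-p}\|v\|_{L^p(\Omega\setminus\Omega_{2\delta})}^p \leq C\delta^{sp}\|\nabla v\|_{W^{s,p}}^p$. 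After interpolating these $L^p$ bounds against the uniform $L^\infty$ bound to reach $L^2$, and combining with the Bernstein estimates, I expect an elastic bound of the form $C(N^{-2s} + \delta^{-2}N^{-2-2s} + \delta^{2s/\cdot} + \delta)$ times constants depending on $\|\nabla v\|_{W^{s,p}}$ and $\|\nabla v\|_{L^\infty}$; choosing $N = \delta^{-1} = \epsilon^{-1}$ then gives $\|\dist(\nabla\tilde v_N,\tilde K)\|_{L^2(\Omega)}^2 \leq C(\epsilon^{2s} + \epsilon)$ up to harmless small exponent losses, so the elastic contribution does not alter the threshold coming from the surface term.

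For the surface term I would expand $\nabla^2\tilde v_N = \eta_\delta\nabla^2 v_{<N} + 2\nabla\eta_\delta\otimes\nabla v_{<N} + v_{<N}\nabla^2\eta_\delta$ and estimate each piece in $L^p(\Omega)$: the first piece gives $N^{1-s}\|\nabla v\|_{W^{s,p}}$ by Bernstein, the second gives $\delta^{-1}(\delta^{s} + N^{-s})\|\nabla v\|_{W^{s,p}}$ using $|\nabla\eta_\delta|\leq C\delta^{-1}$, the fractional Poincaré inequality on the collar, and Bernstein on the high-frequency part, and the third gives $\delta^{-2}(\delta^{1+s} + N^{-1-s})\|\nabla v\|_{W^{s,p}}$ similarly. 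Raising to the $p$-th power, multiplying by $\epsilon^p$, and choosing $N^{-1} = \delta = \epsilon$ yields $\epsilon^p\|\nabla^2\tilde v_N\|_{L^p(\Omega)}^p \leq C\epsilon^p \cdot \epsilon^{p(s-1)}\|\nabla v\|_{W^{s,p}}^p = C\epsilon^{ps}\|\nabla v\|_{W^{s,p}}^p$. Feeding both contributions into the assumed lower bound gives $\epsilon^{2\mu} \leq C\tilde E_{\epsilon,p} \leq C(\epsilon^{2s} + \epsilon^{ps} + \epsilon)$ (again up to arbitrarily small losses), and since $\mu \in (0,1/2)$ forces the $\epsilon$ term to be negligible, letting $\epsilon\to 0$ yields $2\mu \leq ps$, i.e.\ $s \geq \frac{2\mu}{p}$ cannot be improved — more precisely the inequality can only hold for all small $\epsilon$ if $ps \leq 2\mu$, that is $s \leq \frac{2\mu}{p}$.

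The main obstacle I anticipate is the bookkeeping needed to pass consistently between the $L^p$-controlled quantities (Hessian, frequency truncation, Poincaré) and the $L^2$-norm appearing in the elastic term, while keeping all exponents of $\delta$ and $N$ compatible with the single scaling choice $N^{-1} = \delta = \epsilon$; one must check that the interpolation losses (and the small parameter $\nu$ analogous to the one in Proposition \ref{prop:scaling_BV}, arising when $p < 2$ forces use of an $L^\infty$--$L^p$ interpolation to recover $L^2$) can be absorbed into a constant without degrading the threshold $s \leq 2\mu/p$. The role of the hypothesis $1 < p \leq 2$ is precisely to make these $L^p \hookrightarrow L^2$ passages on the thin collar work with a favorable power of $\delta$; the restriction $\mu < 1/2$ again guarantees, as in Remark \ref{rmk:trace}, that the $O(\epsilon)$ error term stemming from the boundary layer is subcritical.
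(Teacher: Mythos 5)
Your proposal follows the paper's own proof essentially step for step: the same shift to $v=u-Mx-b$, the same competitor $\tilde v_N=\eta_\delta v_{<N}$, $L^p$-based Bernstein estimates combined with the fractional Poincar\'e inequality on the boundary collar, the single scaling choice $N^{-1}=\delta=\epsilon$, and the final comparison of exponents against the assumed lower bound; in particular your surface-term estimate $\epsilon^p\|\nabla^2\tilde v_N\|_{L^p(\Omega)}^p\lesssim \epsilon^{ps}$ is exactly \eqref{eq:surface:p}. One quantitative slip in the elastic term: with only $\nabla v\in W^{s,p}\cap L^\infty$ and $p\le 2$ you cannot reach the claimed bound $C(\epsilon^{2s}+\epsilon)$, since an $H^s$-type decay $\|\nabla v_{\ge N}\|_{L^2}^2\lesssim N^{-2s}$ is not available; the $L^p$--$L^\infty$ interpolation you yourself invoke gives instead
\begin{align*}
\|\dist(\nabla v_{<N},\tilde K)\|_{L^2(\Omega)}^2
\le \|\dist(\nabla v_{<N},\tilde K)\|_{L^p(\Omega)}^p\,\|\dist(\nabla v_{<N},\tilde K)\|_{L^\infty(\Omega)}^{2-p}
\lesssim N^{-ps+\nu},
\end{align*}
so the elastic contribution is $\epsilon^{ps-\nu}$ rather than $\epsilon^{2s}$, which is precisely what the paper obtains in \eqref{eq:dist_estp}. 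This does not affect your conclusion: the surface term already contributes $\epsilon^{ps}$, so the corrected comparison $\epsilon^{2\mu}\le C(\epsilon^{ps-\nu}+\epsilon^{1-\nu})$ with $\mu<1/2$ still forces $ps\le 2\mu$, i.e.\ $s\le \frac{2\mu}{p}$ (also note the directional slip ``$2\mu\le ps$'' in your final sentence, which you immediately correct).
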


In other words, the scaling behaviour yields an upper bound on the regularity of convex integration solutions, i.e. on a Sobolev $W^{s,p}_{loc}$ scale, we can at most have $\nabla u \in W^{\frac{2\mu}{p},p}_{loc}(\R^n)\cap L^{\infty}(\R^n)$.

The proof proceeds as in the $BV$ setting by using Bernstein type estimates.

\begin{proof}
As in the previous proofs, we reduce to the shifted problem by setting $v(x)=u(x)-Mx-b$ for which we assume that for some $b\in \R^n$ we have $v\in W^{s,p}(\R^n)$ and $\supp(v) \subset \overline{\Omega}$. Hence, if $u$ is a solution to \eqref{eq:conv_int}, then $v$ is a solution to \eqref{eq:conv_inta}. The correspondingly modified energy reads
\begin{align*}
\tilde{E}_{\epsilon,p}= \min\limits_{\nabla v = 0 \text{ a.e. in } \R^n \setminus \overline{\Omega}} \left\{ \int\limits_{\Omega} \dist^2(\nabla v, \tilde{K})dx + \epsilon^p \|\nabla^2 v\|_{L^p(\Omega)}^p \right\}.
\end{align*}
As in the proof of Proposition \ref{prop:scaling_BV}, we use the standard Littlewood-Paley decomposition to define
\begin{align*}
\tilde{v}_N(x):= \eta_{\delta}(x) v_{<N}(x),
\end{align*}
and estimate for $N=\delta^{-1}$
\begin{align*}
&\|\dist(\nabla \tilde{v}_{N}, \tilde{K})\|_{L^2(\Omega)}^2
\leq C(\|\dist(\nabla v_{<N},\tilde{K})\|_{L^2(\Omega)}^2 + \|v_{<N} \nabla \eta_{\delta} \|_{L^2(\Omega)}^2 + \delta)\\
&\leq C(\|\dist(\nabla v_{<N},\tilde{K})\|_{L^2(\Omega)}^2 + \|v_{<N} \nabla \eta_{\delta} \|_{L^1(\R^n)}\|v_{<N} \nabla \eta_{\delta} \|_{L^{\infty}(\R^n)} + \delta)\\
&\leq C_{\nu}(\|\dist(\nabla v_{<N},\tilde{K})\|_{L^2(\Omega)}^2 + \delta^{1-\nu}\|\nabla v\|_{L^{\infty}(\Omega)}^2 + \delta),
\end{align*}
where we used similar estimates as in \eqref{eq:aux_1}, \eqref{eq:aux_2} in the proof of Proposition \ref{prop:scaling_BV}.
Furthermore,
\begin{align}
\label{eq:dist_estp}
\begin{split}
\|\dist(\nabla v_{<N},\tilde{K})\|_{L^2(\Omega)}^2
&\leq \|\dist(\nabla v_{<N}, \tilde{K})\|_{L^p(\Omega)}^p \|\dist(\nabla v_{<N}, \tilde{K})\|_{L^{\infty}(\Omega)}^{2-p}\\
&\leq C(\|\dist(\nabla v, \tilde{K})\|_{L^p(\Omega)} + \|\nabla v- \nabla v_{<N}\|_{L^p(\R^n)})^p \\
& \quad \times (N^{\nu}\|\nabla u\|_{L^{\infty}(\R^n)}^{2-p}+C_{\tilde{K}})  \\
&\leq  C \|\nabla v_{\geq N}\|_{L^p(\R^n)}^p (N^{\nu}\|\nabla u\|_{L^{\infty}(\R^n)}^{2-p}+C_{\tilde{K}})\\
&\leq C N^{-ps}\|\nabla v\|_{W^{s,p}(\R^n)}^p (N^{\nu}\|\nabla u\|_{L^{\infty}(\R^n)}^{2-p}+C_{\tilde{K}}).
\end{split}
\end{align}
Here we used that by Bernstein estimates, Calder\'on-Zygmund estimates, and the compact support of $\nabla v$,
\begin{align*}
\|\nabla v_{<N}\|_{L^{\infty}(\R^n)}^{2-p}\leq C_{\nu} N^{\nu} \|\nabla v_{<N}\|_{L^p(\R^n)}^{2-p} \leq C_{\nu} N^{\nu} \|\nabla v\|_{L^p(\R^n)}^{2-p} \leq C_{\nu} N^{\nu} \|\nabla v\|_{L^{\infty}(\R^n)}^{2-p}
\end{align*}
for $\nu = \nu(p)>0$ being arbitrarily small.
Thus, for $N^{-1}=\delta$ the elastic energy is controlled by
\begin{align*}
\|\dist(\nabla \tilde{v}_{N}, \tilde{K})\|_{L^2(\Omega)}^2
\leq C( \delta^{ps - \nu}\|\nabla v\|_{W^{s,p}(\R^n)}^p +  \delta^{1-\nu}\|\nabla v\|_{L^{\infty}(\R^n)}^2 + \delta)
\end{align*}
with $C=C(\tilde{K}, \nu, n, \|\nabla v\|_{L^{\infty}(\R^n)})$.
For the surface energy we estimate (by using Poincar\'e and Bernstein)
\begin{align}
\label{eq:surface:p}
\begin{split}
\|\nabla^2 \tilde{v}_N\|_{L^p(\Omega)}^p
&\leq C (\|\nabla^2 v_{<N}\|_{L^p(\Omega)}^p + \|\nabla \eta_{\delta}\nabla v_{<N}\|_{L^p(\Omega)}^p + \|v_{<N}\nabla^2 \eta_{\delta}\|_{L^p(\Omega)}^p)\\
&\leq C (\|\nabla^2 v_{< N}\|_{L^p(\R^n)}^p + \|\nabla \eta_{\delta}\nabla v_{\geq N}\|_{L^p(\R^n)}^p + \|v_{\geq N}\nabla^2 \eta_{\delta}\|_{L^p(\R^n)}^p\\
& \quad +  \|\nabla \eta_{\delta}\nabla v\|_{L^p(\R^n)}^p + \|v\nabla^2 \eta_{\delta}\|_{L^p(\R^n)}^p)\\
& \leq C (N^{p-sp}+\delta^{-p}N^{-sp} + \delta^{-2p}N^{-sp-p} \\
& \qquad + \delta^{-p}\delta^{sp}+ \delta^{-2p}\delta^{p+sp})\|\nabla v\|_{W^{s,p}(\R^n)}^p\\
& \leq C \delta^{-p+sp}\|\nabla v\|_{W^{s,p}(\R^n)}^p,
\end{split}
\end{align}
where we have again chosen $\delta=N^{-1}$.
Thus, by recalling the assumed lower bound for the energy, we infer
\begin{align*}
\epsilon^{2\mu} \leq \tilde{E}_{\epsilon,p} \leq C(\nu, \|\nabla v\|_{L^{\infty}(\Omega)}, \|\nabla v\|_{W^{s,p}(\Omega)})( \epsilon^{p} N^{p-sp}+ C N^{-1+\nu} + C N^{-1} + N^{-ps+\nu}),
\end{align*}
where $C\geq 1$.
Further setting $N=\epsilon^{-1}$, we hence deduce that
\begin{align*}
\epsilon^{2\mu} &\leq \tilde{E}_{\epsilon,p} 
\leq C (\epsilon^{sp - \nu}  + \epsilon^{1-\nu}).
\end{align*}
As a consequence, for $\mu \in (0,1/2)$ on a $W^{s,p}$ Sobolev scale no convex integration solution can be more than $W^{\frac{2\mu}{p},p}$-regular (which can be observed by choosing $\nu$ depending on $\mu$ sufficiently small and by passing to the limit $\epsilon \rightarrow 0$).
\end{proof}

\begin{rmk}
\label{rmk:break_scaling1}
Similarly as in \cite{RZZ16} and in \cite{Si}, we have thus obtained that if a family of energy functionals is controlled, then the possible $W^{s,p}$ Sobolev regularity is determined in terms of the product $sp$ and not by $s,p$ individually. Here the $L^{\infty}$ bound was crucial, as it allowed us to ``break scaling".
\end{rmk}

\begin{rmk}
\label{rmk:origami}
The work of Dacorogna, Marcellini and Paolini on origami constructions
\cite{DMP08a,DMP08b,DMP08c,DMP10} illustrates that in some cases it is possible to saturate the sharp regularity threshold originating from scaling laws by convex integration solutions: Considering the energy $E_{\epsilon,1}$, in their case, it is possible to approach $\mu \rightarrow \frac{1}{2}$ and self-similar convex integration solutions of BV regularity exist. This shows that in their very flexible setting of the inclusion $\nabla u \in O(2)$ (or also $\nabla u \in O(n)$ and zero boundary conditions) the only obstruction is the presence or absence of trace estimates.
\end{rmk}

\section{Flexibility}
\label{sec:flex}

In this section, we recall the convex integration results from \cite{RZZ16}, \cite{RZZ17} in the model case of the geometrically linearised hexagonal-to-rhombic phase transformation (which is discussed in Section \ref{sec:hex_rhombic}). In particular, after recalling the general outline of the convex integration algorithm in Section \ref{sec:schemes}, we discuss the two different convex integration schemes, which were used in \cite{RZZ16} and \cite{RZZ17} to derive the higher regularity results (c.f. Sections \ref{sec:replace}-\ref{sec:cover}). In Section \ref{sec:numerics} we will then discuss (non-quantitative) numerical implementations of the schemes and compare these.

\subsection{The geometrically linearised hexagonal-to-rhombic phase transformation}
\label{sec:hex_rhombic}

We recall the model and some properties of the geometrically linearised hexagonal-to-rhombic phase transformation. 

\begin{figure}[t]
\includegraphics[width= 0.9\textwidth, page=1]{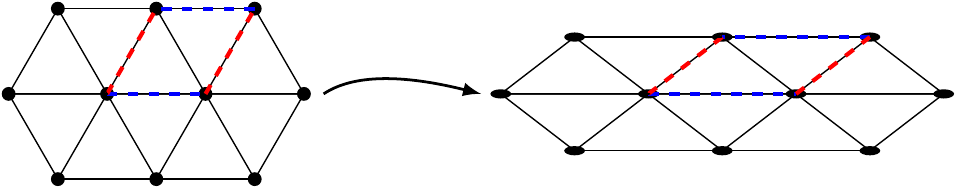}
\caption{A microscopic derivation of the hexagonal-to-rhombic phase transformation: A hexagonal lattice (which can also be interpreted to consist of a rhombic lattice) is deformed by stretching/compressing the sides of the rhombus. Carrying this out while preserving the volume and linearising the resulting deformation gradients leads to the (infinitesimal) transformation strains $e^{(1)}, e^{(2)}, e^{(3)}$.}
\label{fig:hex}
\end{figure}

A difficulty of the variational model from Section \ref{sec:BJ} is the two-fold ``nonlinearity" of the problem, manifested in the two physical requirements (i), (ii). This is also still reflected in the $m$-well problem from Section \ref{sec:m_well}, in that the set $K(\theta)$ still displays $SO(3)$ invariance, and has a multi-well structure at temperatures below $\theta_c$. Hence, in order to simplify this, it is often convenient to ``linearise" the frame indifference assumption and hence to pass from $SO(3)$ to $Skew(3)$ invariance (we recall that $Skew(3)$ is the linearisation of $SO(3)$ at the identity). Mathematically, this has the advantage of dealing with an invariance which is given by a vector space structure in contrast to a nonlinear group structure, while at the same time preserving the ``material nonlinearity", i.e. the multi-well structure. For one-well problems this has been rigorously justified in \cite{DNP02, ABK15}. In the small rotation regime, it is expected that this linearisation still captures important features of the nonlinear original problem \eqref{eq:m_well} (although care is required, in particular in the large rotation regime, c.f. \cite{Bhat93}). 

In the sequel, we study such a geometrically linearised problem in two-dimensions. Passing formally from the deformation $y(x)$ to the displacement $u(x)=y(x)-x$ and dropping all ``higher order terms" (c.f. the discussion in \cite[Chapter 11]{B}), this turns the nonlinear problem \eqref{eq:m_well} (at fixed temperature $\theta<\theta_c$) into a \emph{linearised $m$-well problem} of the type
\begin{align}
\label{eq:lin_m_well}
e(\nabla u):= \frac{1}{2}(\nabla u + (\nabla u)^t) \in \{e^{(1)},\dots,e^{(m)}\} + Skew(3).
\end{align}
Here $e(\nabla u)$ denotes the \emph{(infinitesimal) strain tensor} and $e^{(1)}, \dots, e^{(m)} \in \R^{3\times 3}_{sym}$ represent the
variants of martensite. In the sequel, we specify the transformation to be the two-dimensional \emph{hexagonal-to-rhombic} phase transformation (which is related to transformations occurring in materials such as Mg$_2$Al$_4$Si$_{18}$, Mg-Cd alloys or Pb$_3$(VO$_4$)$_2$, see \cite{KK91}, \cite{CPL14}), c.f. Figure \ref{fig:hex} for a microscopic two-dimensional derivation of the deformation matrices (the third direction can be ignored as the material only undergoes an affine change there). Assuming that the material undergoes at most an affine deformation in the third direction, allows us to reduce the three-dimensional problem to a two-dimensional one. Hence, in the following we study the differential inclusion
\begin{align}
\label{eq:hex_rhom}
e(\nabla u):= \frac{1}{2}(\nabla u + (\nabla u)^t) \in \{e^{(1)},e^{(2)},e^{(3)}\} + Skew(2),
\end{align}
where now $u:\Omega \subset \R^2 \rightarrow \R^2$ and 
\begin{align}
\label{eq:hex_rhom_wells}
e^{(1)} = \begin{pmatrix} 1 & 0 \\ 0 & -1 \end{pmatrix}, \ 
e^{(2)} = \frac{1}{2}\begin{pmatrix} -1 & \sqrt{3} \\ \sqrt{3} & 1 \end{pmatrix}, \ 
e^{(3)} = \frac{1}{2}\begin{pmatrix} -1 & -\sqrt{3} \\ -\sqrt{3} & 1 \end{pmatrix}.
\end{align}
Note that the traces of all three matrices in \eqref{eq:hex_rhom_wells} vanish, which corresponds to the modelling assumption that the transformation is (infinitesimally) volume preserving.

We recall two important features of the model \eqref{eq:hex_rhom}, \eqref{eq:hex_rhom_wells}, which were discussed in more detail in Section 2 in \cite{RZZ16} (c.f. also Figure \ref{fig:rank_one}):

\begin{lem}
\label{lem:rank_one_conn}
Let $e^{(1)}, e^{(2)}, e^{(3)}$ be as in \eqref{eq:hex_rhom_wells}. Then, for each pair $i,j \in \{1,2,3\}$, with $i\neq j$, there exist (up to a change of sign) exactly two vectors $n_{ij}^{1}, n_{ij}^{2}\in S^{1}$ and two vectors $a_{ij}^{1}, a_{ij}^{2} \in \R^2 \setminus \{0\}$ such that 
\begin{align*}
e^{(i)}-e^{(j)} = \frac{1}{2}(a_{ij}^{k}\otimes n_{ij}^{k} + n_{ij}^k \otimes a_{ij}^k)=: a_{ij}^k\odot n_{ij}^k, \ k \in \{1,2\}. 
\end{align*}
\end{lem}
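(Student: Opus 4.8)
The statement is a purely linear-algebraic fact about the three symmetric, trace-free $2\times 2$ matrices $e^{(1)},e^{(2)},e^{(3)}$, so the plan is to reduce the problem, for a fixed pair $i\neq j$, to counting rank-one decompositions of the symmetric matrix $D:=e^{(i)}-e^{(j)}$. The key observations are: first, $D$ is symmetric and trace-free (since each $e^{(k)}$ is), and second, $D\neq 0$ (the wells are distinct and their differences are not skew, which one checks from \eqref{eq:hex_rhom_wells}). A symmetric matrix $D$ admits a decomposition $D = a\odot n$ with $a\in\R^2\setminus\{0\}$, $n\in S^1$ precisely when $\det D \leq 0$; this is the standard Hadamard/rank-one condition in the symmetrized (geometrically linear) setting, and it is the analogue of the classical two-gradient lemma. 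Since $\tr D = 0$, the eigenvalues of $D$ are $\pm\lambda$ with $\lambda>0$, so $\det D = -\lambda^2 < 0$, and the strict inequality is exactly what will force \emph{two} distinct solutions rather than one.

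\textbf{Key steps.} First I would write $D$ in its eigenbasis: there is an orthonormal basis $\{f_1,f_2\}$ of $\R^2$ with $D = \lambda\, f_1\otimes f_1 - \lambda\, f_2\otimes f_2$ for some $\lambda>0$. Then one checks the identity
\begin{align*}
\lambda(f_1\otimes f_1 - f_2\otimes f_2) = \tfrac{\lambda}{2}\big( (f_1+f_2)\otimes(f_1-f_2) + (f_1-f_2)\otimes(f_1+f_2)\big),
\end{align*}
so that $a \propto f_1+f_2$, $n \propto f_1 - f_2$ gives one symmetrized rank-one decomposition, and the swap $a\propto f_1 - f_2$, $n\propto f_1+f_2$ gives a second one. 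Since $f_1\pm f_2$ are orthogonal and nonzero, these two normal directions $n^1_{ij}, n^2_{ij}\in S^1$ are genuinely distinct (in fact orthogonal), and not related by a sign. The remaining task is the uniqueness claim: that there are \emph{no other} solutions up to sign. For this I would argue directly: if $D = a\odot n$ with $|n|=1$, pick $m\in S^1$ with $m\perp n$; then $n\otimes n$, $n\odot m$, $m\otimes m$ form a basis of the symmetric $2\times2$ matrices, and writing $D$ in this basis, the condition that the $m\otimes m$-component vanish forces $a \in \operatorname{span}\{n,m\}$ with a prescribed $m$-component, while $\tr D = 0$ pins down the relation between $n$ and the eigenframe; a short computation shows $n$ must make a $45^\circ$ angle with the eigenvectors of $D$, leaving exactly the two directions already found (each with its sign ambiguity). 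Equivalently and more slickly: the map $n\mapsto$ (quadratic form $n\mapsto$ component structure) shows $\det D = -\tfrac14|a|^2\sin^2\theta$ type relations pin the angle; but the cleanest route is the explicit basis computation.

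\textbf{Main obstacle.} The existence half is routine; the only real content is the \emph{exactly two} count, i.e. ruling out a continuum or a third solution. The potential subtlety is bookkeeping the sign ambiguities correctly — each unordered pair $\{(a,n),(-a,-n)\}$ counts once, and one must make sure the two decompositions produced are not secretly the same one up to sign and rescaling of $a$ against $n$ (they are not, since the two candidate normals are orthogonal). A second minor point is that the statement is asserted for \emph{each} pair $i,j$ simultaneously, so one should note that the argument is uniform in the pair: it only uses that $e^{(i)}-e^{(j)}$ is symmetric, trace-free and nonzero, all of which hold for every pair by inspection of \eqref{eq:hex_rhom_wells}. One could also simply compute the six differences explicitly and exhibit the vectors, but the eigenbasis argument is cleaner and makes the "exactly two" transparent via the strict sign of the determinant.
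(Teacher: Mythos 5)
Your proposal is correct, and it takes a different route from the paper: the paper does not prove Lemma \ref{lem:rank_one_conn} at all, but recalls it from Section 2 of \cite{RZZ16}, where the symmetrised rank-one connections are obtained by explicitly computing the differences $e^{(i)}-e^{(j)}$ pair by pair and exhibiting the vectors $a_{ij}^k, n_{ij}^k$ concretely (which is what the quantitative constructions there need). Your eigenbasis argument is coordinate-free and uniform in the pair $(i,j)$: it only uses that $D=e^{(i)}-e^{(j)}$ is symmetric, trace-free and nonzero, and the strict negativity of $\det D$ is exactly what produces two (rather than one) isotropic directions, which makes the ``exactly two'' count transparent; it also explains structurally why the two normals are orthogonal, i.e.\ the $\tfrac{\pi}{2}$-rotation relation mentioned in the caption of Figure \ref{fig:rank_one}. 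What the explicit computation buys instead is the concrete list of normals and shear vectors used later in the algorithms. Two small streamlinings of your uniqueness step: since $\tr(a\odot n)=a\cdot n$, trace-freeness immediately forces $a\perp n$, so you may write $a=\alpha m$ with $m\perp n$, $|m|=1$, and then $\alpha\, m\odot n$ has eigenvectors $(m\pm n)/\sqrt{2}$ with eigenvalues $\pm\alpha/2$; matching these with the eigenframe of $D$ pins down $n$ up to sign to the two directions $(f_1\pm f_2)/\sqrt{2}$ without any component bookkeeping. Equivalently, your ``$m\otimes m$-component vanishes'' condition is $n\cdot Dn=0$, and the zero set of the indefinite quadratic form $v\mapsto v\cdot Dv$ consists of exactly two lines, which is the clean way to phrase the $45^\circ$ claim. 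Finally, note that your two decompositions are precisely related by exchanging the roles of $a$ and $n$ (after renormalisation), which is why they are genuinely distinct as pairs $(a,n)$ with $n\in S^1$ even though $a\odot n$ is symmetric in its arguments.
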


\begin{figure}[t]
\includegraphics[width=4cm, page=2]{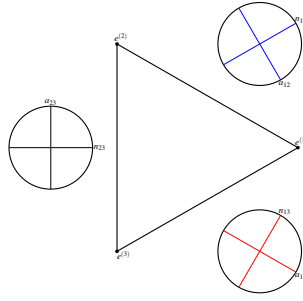}
\caption{
The symmetrised rank-one connections between the wells: The triangle depicts the equilateral triangle that is spanned in strain space by the wells $e^{(1)}, e^{(2)}, e^{(3)}$. As stated in Lemma \ref{lem:rank_one_conn} these are pairwise symmetrised rank-one connected with two possible rank-one connections each (due to the trace free constraint these are related by $\frac{\pi}{2}$-rotations). The orientation of these rank-one directions are depicted in the circles which are drawn next to the corresponding sides of the equilateral triangle. Here the choice of the sign of the normals is still free, depending on the ordering of the wells. }
\label{fig:rank_one}
\end{figure}

This lemma in particular implies that for every pair $e^{(i)}, e^{(j)}$ with $i \neq j$ there exist \emph{simple laminates}. These are one-dimensional microstructures in which the strain only attains the values $e^{(i)}, e^{(j)}$, i.e. there exists $\nu \in S^1$ such that
\begin{align*}
e(\nabla u)(x) = f(x \cdot \nu) \mbox{ and } f(x\cdot \nu) \in \{e^{(i)}, e^{(j)}\}. 
\end{align*}
Moreover, (up to a change of sign) $\nu \in \{n_{ij}^1, n_{ij}^2\}$. These are however not the only possible piecewise affine microstructures, which can occur in the hexagonal-to-rhombic phase transformation. For a complete list of ``homogeneous" deformations we refer to Section 7 in \cite{RZZ16}. Moreover, we remark that these ``homogeneous" deformations can also be concatenated into more complex microstructures.

As a consequence of the presence of the rank-one connections between the wells, the transformation displays a very flexible behaviour. This is manifested in the size of its convex hulls (c.f. Figure \ref{fig:lam_hull}):

\begin{lem}
\label{lem:hulls}
Let $e^{(1)}, e^{(2)}, e^{(3)}$ be as in \eqref{eq:hex_rhom_wells}. Then,
\begin{align*}
\{e^{(1)}, e^{(2)}, e^{(3)}\}^{lc} = 
\conv(\{e^{(1)}, e^{(2)}, e^{(3)}\}).
\end{align*}
In particular, 
\begin{align*}
\dim(\{e^{(1)}, e^{(2)}, e^{(3)}\}^{lc}) = 
\dim(\conv(\{e^{(1)}, e^{(2)}, e^{(3)}\}))=2.
\end{align*}
\end{lem}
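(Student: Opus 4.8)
The plan is to establish the two inclusions defining the lamination convex hull, then deduce the dimension statement. Recall that for any compact set $S$, one always has $S^{lc} \subseteq S^{pc} \subseteq \conv(S)$, where $S^{pc}$ is the polyconvex hull; this gives the inclusion $\{e^{(1)}, e^{(2)}, e^{(3)}\}^{lc} \subseteq \conv(\{e^{(1)}, e^{(2)}, e^{(3)}\})$ for free. The substance of the lemma is the reverse inclusion: every matrix in the (two-dimensional) triangle spanned by $e^{(1)}, e^{(2)}, e^{(3)}$ lies in the lamination convex hull. Here ``lamination convex'' must be understood in the symmetrised sense appropriate to the linearised problem \eqref{eq:hex_rhom}, i.e. we are allowed to split along symmetrised rank-one directions $a \odot n$, which in two dimensions and under the trace-free constraint means any two symmetric trace-free matrices are (symmetrised) rank-one connected.

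First I would record the key algebraic fact: in $\R^{2\times 2}_{sym}$, two symmetric matrices $A, B$ are symmetrised rank-one connected (i.e. $A - B = a \odot n$ for some $a, n$) if and only if $\det(A-B) \le 0$; and every trace-free symmetric $2\times 2$ matrix $A - B$ automatically has $\det(A-B) = -\frac14\big((A-B)_{11}^2 \cdot 4 + \ldots\big) \le 0$ — more precisely a trace-free symmetric matrix $\begin{pmatrix} p & q \\ q & -p\end{pmatrix}$ has determinant $-(p^2+q^2) \le 0$. Since $e^{(1)}, e^{(2)}, e^{(3)}$ are all trace-free (as noted right after \eqref{eq:hex_rhom_wells}) and differences of trace-free matrices are trace-free, every pair of points in the affine plane through $e^{(1)}, e^{(2)}, e^{(3)}$ is symmetrised rank-one connected. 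This is exactly the content foreshadowed by Lemma \ref{lem:rank_one_conn}.

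Next I would run the standard lamination argument inside the triangle $T := \conv(\{e^{(1)}, e^{(2)}, e^{(3)}\})$. Take any $F \in T$. If $F$ lies on an edge, it is a convex combination of two vertices which are rank-one connected by the previous step, so $F \in \{e^{(1)}, e^{(2)}, e^{(3)}\}^{(1)} \subseteq \{e^{(1)}, e^{(2)}, e^{(3)}\}^{lc}$. If $F$ lies in the interior, draw the line through $F$ and a vertex, say $e^{(1)}$; it meets the opposite edge $[e^{(2)}, e^{(3)}]$ at a point $G$. Then $G \in \{e^{(1)}, e^{(2)}, e^{(3)}\}^{(1)}$ by the edge case, and $F$ is a convex combination of $e^{(1)}$ and $G$; since $e^{(1)} - G$ is trace-free symmetric, it is symmetrised rank-one connected, so $F \in \{e^{(1)}, e^{(2)}, e^{(3)}\}^{(2)} \subseteq \{e^{(1)}, e^{(2)}, e^{(3)}\}^{lc}$. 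Hence $T \subseteq \{e^{(1)}, e^{(2)}, e^{(3)}\}^{lc}$, and combined with the trivial inclusion we get equality. Finally, since $e^{(1)}, e^{(2)}, e^{(3)}$ are three affinely independent points in $\R^{2\times 2}_{sym}$ (they span an equilateral triangle — one can check $e^{(1)} - e^{(2)}$ and $e^{(1)} - e^{(3)}$ are linearly independent), their convex hull is genuinely two-dimensional, giving the dimension claim.

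The only mild subtlety — and the step I would be most careful about — is keeping the bookkeeping of which notion of ``rank-one connected'' and hence which $(\cdot)^{lc}$ is in play: everything must be phrased in the symmetrised/linearised category of \eqref{eq:lin_m_well}–\eqref{eq:hex_rhom}, where the relevant object is really $e(\nabla u)$ up to $\Skew(2)$, so that the trace-free observation after \eqref{eq:hex_rhom_wells} does the heavy lifting and no genuine rank-one (non-symmetrised) obstruction can appear. Once that is pinned down, the proof is the two-step lamination above plus the affine-independence count, and there is no real obstacle; this is why the paper attributes the detailed discussion to Section 2 of \cite{RZZ16} and states it here as a lemma.
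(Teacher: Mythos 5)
Your proof is correct and follows essentially the same route the paper indicates (it gives no proof itself, deferring to Section 2 of \cite{RZZ16} and \cite[Chapter 11]{B}): the observation that trace-free symmetric $2\times 2$ matrices have non-positive determinant and are therefore pairwise symmetrised rank-one connected, combined with a two-fold lamination through a vertex and the opposite edge, is exactly the argument behind the caption of Figure \ref{fig:lam_hull} and the remark that the inclusion is co-dimension one in the symmetrised setting. Your care about working with the symmetrised ($a\odot n$) notion of compatibility, consistent with Lemma \ref{lem:rank_one_conn}, is precisely the right bookkeeping.
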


\begin{figure}[t]
\includegraphics[width=4cm, page=3]{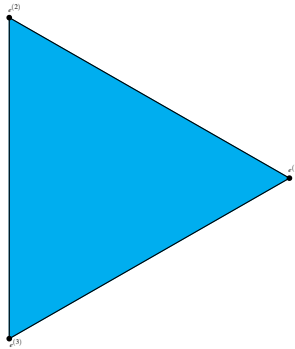}
\caption{The laminar convex hull of the wells $e^{(1)}, e^{(2)}, e^{(3)}$. Using two-fold laminations, it is possible to reach any boundary datum in the interior of the convex hull of the three wells $e^{(1)}, e^{(2)}, e^{(3)}$.
}
\label{fig:lam_hull}
\end{figure}

For a discussion of the argument leading to this result we refer to Section 2 in \cite{RZZ16} and \cite[Chapter 11]{B}. In particular, in full matrix space, the inclusion \eqref{eq:hex_rhom} is a co-dimension one problem (the only constraint coming from the trace condition). Hence, there is enough room to construct convex integration solutions:

\begin{thm}[Existence of convex integration solutions]
\label{prop:conv_int}
Let $\Omega \subset \R^2$ be an open Lipschitz domain.
Let $e^{(1)},e^{(2)},e^{(3)}$ be as in \eqref{eq:hex_rhom_wells}. Then for any $M\in \R^{2\times 2}$ with
$$e(M):=\frac{1}{2}(M+M^t) \in \intconv(\{e^{(1)},e^{(2)},e^{(3)}\})$$ 
there exists a solution $u$ of \eqref{eq:hex_rhom}. Moreover, it is possible to ensure that for any $\epsilon>0$ there is a solution $u_{\epsilon}$ to \eqref{eq:hex_rhom} with
\begin{align}
\label{eq:approx}
\|u_{\epsilon}-Mx\|_{L^{\infty}(\Omega)}< \epsilon.
\end{align}
\end{thm}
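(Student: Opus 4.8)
The plan is to establish Theorem~\ref{prop:conv_int} by convex integration, realising $u$ as the uniform limit of a sequence of piecewise affine maps whose gradients lie in shrinking neighbourhoods of the well set and which all carry the affine boundary datum $Mx$. \emph{Reformulation and hulls.} First I would pass from the strain inclusion \eqref{eq:hex_rhom} to the gradient inclusion $\nabla u\in K$, where $K:=\bigcup_{i=1}^{3}\big(e^{(i)}+Skew(2)\big)$ is a union of three affine lines in $\R^{2\times2}$. Since $e^{(1)},e^{(2)},e^{(3)}$ and $Skew(2)$ are all trace free, every element of $K$ is trace free, so any solution is automatically divergence free and it suffices to run the construction inside the three-dimensional space $V:=\{F\in\R^{2\times2}:\ \tr F=0\}$, in which $K$ consists of three lines parallel to the one-dimensional subspace $Skew(2)$. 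Combining Lemma~\ref{lem:hulls} with the rank-one connections of Lemma~\ref{lem:rank_one_conn} (lifted so as to absorb the $Skew(2)$-direction) one identifies $K^{lc}=\{F\in V:\ e(F)\in\conv(\{e^{(1)},e^{(2)},e^{(3)}\})\}$, a solid prism in $V$ whose (relative) interior is exactly $\{F\in V:\ e(F)\in\intconv(\{e^{(1)},e^{(2)},e^{(3)}\})\}$. Hence the hypothesis $e(M)\in\intconv(\{e^{(1)},e^{(2)},e^{(3)}\})$ says precisely that $M$ lies in the interior of $K^{lc}$, which is the natural starting datum.

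\emph{In-approximation.} The core step is to build an in-approximation of $K$: a decreasing sequence of relatively open sets $U_1\supset U_2\supset\cdots$ in $V$ with $\bigcap_k\overline{U_k}=K$ and such that every $F\in U_{k+1}$ lies in the interior (relative to $V$) of the lamination convex hull of $U_k\cap B_{\rho_k}(F)$ for suitable scales $\rho_k\to 0$. The geometric input is Lemma~\ref{lem:rank_one_conn}: each pair $e^{(i)},e^{(j)}$ admits two symmetrised rank-one connections, and together with the free $Skew(2)$-direction the corresponding honest rank-one increments span all of $V$ at each $F$ in the interior of $K^{lc}$; this, together with a local version of the hull identity of Lemma~\ref{lem:hulls}, is what lets one thin a $V$-neighbourhood of the solid prism down onto the three lines of $K$ with controlled loss. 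Once such an in-approximation is available, one may conclude either by hand (next paragraph) or simply by invoking a black-box convex integration theorem with affine boundary data, for which the existence of an in-approximation is the sole hypothesis (see e.g.\ \cite{MS,MS1,K1}).

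\emph{Iteration and passage to the limit.} Starting from $u_0(x)=Mx$ --- whose gradient lies in $U_1$, after choosing $U_1$ accordingly --- I would iterate the one-step lemma: given $u_k$ piecewise affine with $\nabla u_k\in U_k$ in $\Omega$ and $u_k=Mx$ on $\partial\Omega$, cover the part of $\Omega$ where the gradient has not yet stabilised by rescaled copies of a reference configuration (a Vitali-type covering, using that $\Omega$ is Lipschitz, and leaving a thin layer near $\partial\Omega$ where $u_k$ is left untouched), and on each piece replace the affine map by a finite laminate with gradient in $U_{k+1}$, arranged so that $\|u_{k+1}-u_k\|_{L^\infty(\Omega)}\le 2^{-k}\epsilon$ and $\|\nabla u_{k+1}-\nabla u_k\|_{L^1(\Omega)}\le 2^{-k}$, while keeping $u_{k+1}=Mx$ near $\partial\Omega$. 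Then $u_k\to u$ uniformly with $\|u-Mx\|_{L^\infty(\Omega)}<\epsilon$ and $u=Mx$ on $\partial\Omega$, while along a subsequence $\nabla u_k\to\nabla u$ a.e.; since $\nabla u_k\in U_k$ and $\dist(U_k,K)\to 0$, this forces $\nabla u\in K$ a.e., i.e.\ $e(\nabla u)\in\{e^{(1)},e^{(2)},e^{(3)}\}+Skew(2)$ a.e.\ in $\Omega$, with the bound \eqref{eq:approx} as required.

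\emph{Main obstacle.} The hard part is the geometric in-approximation step, i.e.\ showing that a full $V$-neighbourhood of the solid prism can be laminated down onto $K$: that whenever $e(F)$ lies near the triangle $\conv(\{e^{(i)}\})$ and $\dist(F,K)$ is small, $F$ belongs to the interior of the lamination convex hull of $K\cap B_{\rho}(F)$ for an appropriate $\rho$. This reduces to the explicit two-connection structure of Lemma~\ref{lem:rank_one_conn} together with a convexity argument showing that the resulting increments fill an open subset of $V$; the covering and boundary-layer bookkeeping near the Lipschitz boundary $\partial\Omega$ and the $L^1$-control needed for a.e.\ convergence are then routine given the geometric-series estimates above.
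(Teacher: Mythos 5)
Your overall route coincides with the paper's: Theorem \ref{prop:conv_int} is not proved there by a new argument but is deduced from the M\"uller--\v{S}ver\'ak/Dacorogna--Marcellini convex integration machinery \cite{MS,DM1}, together with precisely your observation that the symmetrised inclusion \eqref{eq:hex_rhom} should be ``pulled up'' to a full-gradient inclusion $\nabla u\in K=\{e^{(1)},e^{(2)},e^{(3)}\}+Skew(2)$ in the trace-free space, with Lemma \ref{lem:rank_one_conn} supplying the rank-one connections between the lines $e^{(i)}+Skew(2)$ and Lemma \ref{lem:hulls} identifying the hull. So the plan (in-approximation plus piecewise affine iteration with affine boundary data and $L^\infty$/$L^1$ control) is the intended one.

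There is, however, a genuine flaw in how you set up the key object. Your in-approximation condition is stated backwards and, as written, is vacuous: since $U_{k+1}\subset U_k$ are open, every $F\in U_{k+1}$ automatically lies in the interior of the lamination hull of $U_k\cap B_{\rho_k}(F)$, and no replacement step can be extracted from this. What your iteration (and the black-box results of \cite{MS,K1} you wish to invoke) actually needs is the reverse inclusion: each $U_k$ must be contained in the relative interior of the lamination convex hull of the \emph{finer} set $U_{k+1}$, so that a gradient in $U_k$ can be split into a laminate with gradients in $U_{k+1}$. The statement in your ``main obstacle'' paragraph suffers from the same reversal: requiring $F\in\inte\bigl((K\cap B_\rho(F))^{lc}\bigr)$ is neither what is needed nor true in general --- if $B_\rho(F)$ meets only one line $e^{(i)}+Skew(2)$, then $K\cap B_\rho(F)$ admits no rank-one connections at all (differences along the skew direction have nonvanishing determinant), its lamination hull is itself, and it cannot contain $F$ in its interior. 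The correct geometric input is that every $F$ with $e(F)\in\intconv(\{e^{(1)},e^{(2)},e^{(3)}\})$ can be replaced, with affine boundary values and small $L^\infty$ change, by a piecewise affine map whose symmetrised gradients are strictly closer to the wells on (a definite fraction of) the domain, with controlled skew part so that the $U_k$ may be taken bounded; this is exactly what the replacement constructions of Lemma \ref{lem:replace1} and Lemma \ref{lem:replace2} provide. Once your in-approximation is restated in this form, the iteration and the passage to the limit you describe go through as in \cite{MS,DM1}.
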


This (non-quantitative) result follows for instance from the arguments of Müller-{\v{S}}ver{\'a}k \cite{MS} or from the results of Dacorogna-Marcellini \cite{DM1}. As seen from the approximation result \eqref{eq:approx} these solutions are highly non-unique; in \cite{DM1} this is made more precise in a Baire category sense.
We remark that in invoking the arguments from \cite{MS} and \cite{DM1}, there is a slight subtlety here, in that the inclusion is formulated on the level of the symmetrised gradient and not on the level of the full gradient. However, there are various ways of overcoming this, one being to simply ``pull up" the inclusion problem to an inclusion problem for the full gradient.

In \cite{RZZ16} and \cite{RZZ17} together with B. Zwicknagl we analysed the underlying construction schemes more precisely and showed that it is possible to improve the regularity of solutions to \eqref{eq:hex_rhom} on a ($W^{s,p}$) Sobolev scale by choosing the underlying lengths scales carefully. The iterative algorithms in \cite{RZZ16} and \cite{RZZ17} however differed quantitatively, in that they produced different dependences: While the regularity of solutions constructed by the scheme from \cite{RZZ16} depended on the position of the boundary data $M$ in matrix space, the scheme from \cite{RZZ17} could produce solutions with a regularity which did not depend on this (or more precisely, where we could give bounds which were independent of this). 

In the sequel, we discuss and compare explicit numerical implementations of these schemes, which illustrate the differences which occur here. While convex integration solutions in the context of elasticity have been known for quite a while, the present note seems to contain the first numerical implementation of these. In our numerics, we focus on the \emph{qualitative} convex integration result from Theorem \ref{prop:conv_int} and do not seek to optimize the underlying partitions, which was necessary in the quantitative analysis in \cite{RZZ16}, \cite{RZZ17}. This is due to the introduction of highly fractal structures which would exceed the capability of our computers (and which from a certain level onwards would also not be seen as major changes without zooming into the structures). Also, we believe that, while being analytically convenient, the exact coverings from \cite{RZZ16}, \cite{RZZ17} are far from optimal. Since the numerical implementation without this already provides interesting insights, we opted to focus on the problems without the additional layer of (analytical and numerical) difficulty originating from the complicated covering structures from \cite{RZZ16}, \cite{RZZ17}.

\subsection{The outline of the convex integration scheme for the hexagonal-to-rhombic phase transformation}
\label{sec:schemes}

We recall the convex integration schemes from \cite{RZZ16}, \cite{RZZ17} applied to the hexagonal-to-rhombic phase transformation. In the following section, we then discuss and compare some numerical implementations of these. 

We begin by formulating the rough outline of the convex integration algorithms which are used in \cite{RZZ16} and \cite{RZZ17} in terms of ``pseudo-code". In their main structure the two algorithms are similar, however there are important differences which are mainly encoded in the functions, which are used in Step (1b).

\begin{alg}
\label{alg:conv_int}
Let $M \in \mathcal{M}:=\{N \in \R^{2\times 2}: \ e(N)\in \intconv(\{e^{(1)}, e^{(2)}, e^{(3)}\})\}$, where $e^{(1)}, e^{(2)}, e^{(3)}$ are as in \eqref{eq:hex_rhom_wells}. Let $\Omega \subset \R^2$ be a triangle.
\begin{itemize}
\item[(1a)] \emph{Variables.} We consider 
\begin{itemize}
\item the displacement $u_k: \Omega \rightarrow \R^2$ at step $k$, 
\item a collection of (up to null-sets disjoint) triangles $\widehat{\Omega}_k = \{\Omega_1^k,\dots,\Omega_{j_k}^k\}$, which cover $\Omega$, 
\item and the error in matrix space $\epsilon_k: \widehat{\Omega}_k \rightarrow  (0,1)$ at step $k$, which is constant on each subset of $\widehat{\Omega}_k$.
\end{itemize}
\item[(1b)] \emph{Functions.} 
We consider a ``covering function", which covers a given triangle by ``good sets", on which the deformation $u_k$ will be improved, and a ``remainder". More precisely,
\begin{align*}
\Cover_{v}: &\mathcal{T} \times \mathcal{M} \rightarrow \mathcal{R} \times \mathcal{T},\\
&(T,M) \mapsto (\{Q_1,\dots,Q_{j(M,T)}\} , \{T_1,\dots, T_{l(M,T)}\}).
\end{align*}
Here $\mathcal{T}$ is the set of all triangles, $\mathcal{R}$ is a set of certain quadrilaterals (these differ in \cite{RZZ16} and \cite{RZZ17}) and $\mathcal{M}:=\{M\in \R^{2\times 2}: \ e(M) \in \intconv(\{e^{(1)}, e^{(2)}, e^{(3)}\}) \}$. For each $T \in \mathcal{T}$ and $M \in \mathcal{M}$ we have an (up to null sets) disjoint covering
\begin{align*}
T = \bigcup\limits_{Q \in \Cover_{v}(T,M)[1]} Q \cup \bigcup\limits_{\tilde{T}\in\Cover_{v}(T,M)[2]}\tilde{T}, \ 
\end{align*}
with
\begin{align*}
\frac{\left|\bigcup\limits_{Q \in \Cover_{v}(T,M)[1]} Q \right|}{|T|} \geq v.
\end{align*}
The sets $Q \in \Cover_v(T,M)[1]$ are the ``good sets", on which the current displacement gradient $\nabla u_k$ will be modified and pushed towards the energy wells. The sets $\tilde{T} \in \Cover_v(T,M)[2]$ are the ``remainders" on which the displacement $u_k$ is not changed.

We further consider a ``replacement function", which improves the current displacement gradient in the sense that the replaced deformation gradient is closer to the wells (or at least closer to the wells on a large portion of the domain). This depends on the current displacement gradient, the underlying domain and the error in matrix space. As an output it yields 
\begin{itemize}
\item the level sets (in the form of a finite collection of triangles) of the new improved deformation, 
\item a piecewise affine function whose symmetric gradient attains values in $\intconv(\{e^{(1)}, e^{(2)}, e^{(3)}\})$,
\item and an updated error in matrix space. 
\end{itemize}
More precisely,
\begin{align*}
\Replace: \ &\mathcal{R} \times \mathcal{A}_{\intconv(\{e^{(1)}, e^{(2)}, e^{(3)}\})} \times (0,1) \\
& \quad \rightarrow \mathcal{T}  \times \mathcal{A}_{\intconv(\{e^{(1)}, e^{(2)}, e^{(3)}\})} \times (0,1),\\
& \quad (Q,w_{\text{old}},\epsilon) \mapsto (\{T_{1},\dots,T_{j_0}\} , w, \tilde{\epsilon})
\end{align*}
Here $\mathcal{A}_{\intconv(\{e^{(1)}, e^{(2)}, e^{(3)}\})}$ denotes the set of piecewise affine deformations with symmetric gradients in $\intconv\{e^{(1)},e^{(2)},e^{(3)}\}$. Furthermore, $w|_{\partial Q}=w_{old}|_{\partial Q}$.
\item[(2)] \emph{Initialization.} We begin by setting $u_0(x)=Mx$, $\epsilon_0 = \epsilon_0(M)$, $\widehat{\Omega}_0 = \{\Omega\}$. 
\item[(3)] \emph{Iteration step.} The algorithm proceeds iteratively: Assume that $u_k$, $\epsilon_k$ and $\widehat{\Omega}_k$ with $k\geq 0$ are already given.
Then on each $\Omega_{j}^k \subset \widehat{\Omega}_k$ for which  $e(\nabla u_k)|_{\Omega_j^k} \notin \{e^{(1)}, e^{(2)}, e^{(3)}\}$, 
apply the function $\Cover_v(\Omega_{j}^k, \nabla u_k|_{\Omega_j^k})$. Let $\Cover_v(\Omega_{j}^k,\nabla u_k|_{\Omega_j^k})[1]= \{\Omega_{j,1}^k,\dots,\Omega_{j,m(j,k)}^k\}$.

For each $\Omega_{j,m}^k \in \Cover_v(\Omega_j^k,\nabla u_k|_{\Omega_j^k})[1]$ apply the function \\
$\Replace(\Omega_{j,m}^k, u_k |_{\Omega_{j}^k},\epsilon_k)$. This yields 
\begin{itemize}
\item[(i)] an up to null-sets disjoint covering of $\Omega_{j,m}^k$ into triangles 
$$\{\Omega_{j,m,1}^k,\dots,\Omega_{j,m,l(j,m,k)}^k\}:=\Replace(\Omega_{j,m}^k, u_k|_{\Omega_{j}^k},\epsilon_k)[1];$$
\item[(ii)] a function $v_{j,m,k}:=\Replace(\Omega_{j,m}^k, u_k|_{\Omega_{j}^k},\epsilon_k)[2]: \Omega_{j,m}^k \rightarrow \R^2$ whose gradient is constant on each of the sets $\Omega_{j,m,l}^k$ with $l\in \{1,\dots, l(j,m,k)\}$ and for which $v_{j,m,k}(x)=u_k(x) \mbox{ for all } x \in \partial \Omega_{j,m}^k$;
\item[(iii)] a parameter $\tilde{\epsilon}_{j,m,k}:=\Replace(\Omega_{j,m}^k,  u_k |_{\Omega_{j}^k},\epsilon_k)[3]$.
\end{itemize}
\noindent We then set
\begin{align*}
&u_{k+1}|_{\Omega_{j,m}^k}=v_{j,m,k} \mbox{ and } u_{k+1}|_{\Cover_v(\Omega_j^k,\nabla u_k|_{\Omega_j^k})[2]}=u_k|_{\Cover_v(\Omega_j^k,\nabla u_k|_{\Omega_j^k})[2]},\\
&\widetilde{\Omega}_{k+1} = \bigcup\limits_{j,m} \Replace(\Omega_{j,m}^k,  u|_{\Omega_{j}^k},\epsilon_k)[1] \cup \bigcup\limits_{j} \Cover_v(\Omega_j^k,\nabla u_k|_{\Omega_j^k})[2],\\
& \epsilon_{k+1}|_{T} = \left\{
\begin{array}{ll}
\tilde{\epsilon}_{j,m,k} &\mbox{ if } T \subset \Omega_{j,m,l}^k \in \Cover_v(\Omega_j^k,\nabla u_k|_{\Omega_j^k})[1] \mbox{ for some } j \in \{1,\dots,j_k\},\\
\epsilon_{k}|_{T} &\mbox{ if } T \in \Cover_v(\Omega_j^k,\nabla u_k|_{\Omega_j^k})[2] \mbox{ for some } j \in \{1,\dots,j_k\}.
\end{array}
\right.
\end{align*}
If on $\Omega_{j}^k$ we already have $e(\nabla u_k)|_{\Omega_{j}^k} \in \{e^{(1)}, e^{(2)}, e^{(3)}\}$, we set $u_{k+1}|_{\Omega_j^k} = u_{k}|_{\Omega_j^k}$ and
\begin{align*}
\widehat{\Omega}_{k+1}=\widetilde{\Omega}_{k+1}\cup \{\Omega_{j}^{k}\in \widehat{\Omega}_k: \nabla u_k|_{\Omega_{j}^{k}}\in\{e^{(1)}, e^{(2)},e^{(3)}\} \mbox{ a.e.}\}.
\end{align*}
\end{itemize}
\end{alg}

It has been shown in \cite{RZZ16} and \cite{RZZ17} that this procedure is well-defined, if the function $\Replace$ is chosen appropriately (there is a slight subtlety in that in those articles we approximate domains by rectangles, but this does not matter for the non-quantitative algorithm which is used here).
In spite of their similar overall structure, the algorithms from \cite{RZZ16}, \cite{RZZ17} however differ substantially in their underlying replacement constructions encoded by the function $\Replace$. We discuss the different constructions for the function $\Replace$ in Sections \ref{sec:conv_int_1} and \ref{sec:conv_int_2}. In Section \ref{sec:cover} we also present the details on the function $\Cover_v$.

\subsection{The replacement constructions}
\label{sec:replace}

As they form the core of the convex integration algorithm, we discuss the replacement constructions, which are encoded in the function $\Replace$ in the Algorithm \ref{alg:conv_int}, in more detail. Here we used different options in \cite{RZZ16} and \cite{RZZ17}. In spite of this, the main aim of the replacement construction is the same: Given a prescribed gradient distribution, we seek to modify and improve it in the sense that it becomes ``closer" to being an exact solution of the differential inclusion \eqref{eq:hex_rhom}, \eqref{eq:hex_rhom_wells}.\\
On the one hand, in the article \cite{RZZ16}, we achieved this by a ``piecewise affine" construction in which for a given point $x\in \Omega$ the gradient distribution ended up being exactly in the well after a finite (but $x$-dependent) number of steps. However, on parts of the domain, the gradient distribution deteriorated, in the sense that on a part of the domain, the gradient was pushed away from the wells.
On the other hand, in \cite{RZZ17} in general, we had to modify the gradient distribution for each point countably many times, but improved the overall gradient distribution in each step essentially in a ``uniform way". We describe this in more detail in the following two sections. 

\subsubsection{The replacement construction from \cite{RZZ16}}
\label{sec:conv_int_1}

In \cite{RZZ16} the replacement construction (i.e. the function $\Replace$) is based on the following lemma, which is a modification of a construction due to Conti \cite{C} (c.f. Figure \ref{fig:rectangle}):

\begin{lem}[Lemma 3.5 in \cite{RZZ16}]
\label{lem:replace1}
Let $M\in \R^{2\times 2}$ with 
\begin{align*}
e(M):= \frac{1}{2}(M+M^t) \in \intconv(K)
\end{align*}
and let $e^{(i)}$ with $i\in\{1,2,3\}$ (as in \eqref{eq:hex_rhom_wells}) be such that
\begin{align}
\label{eq:nearest_well}
|e(M)-e^{(i)}| \leq \dist(e(M),K) + 4 \epsilon_0,
\end{align}
where $\epsilon_0\in (0,\dist(e(M), \partial\conv(K)))/100$. 
Then there exists a rectangular domain $\Omega_{\delta}(M)$ (which in general is
rotated with respect to the coordinate axes) and a Lipschitz function $u:\R^2
\rightarrow \R^2$, with symmetric gradients $e^{(i)} \in K, \tilde{e}_1, \dots, \tilde{e}_4 \in \intconv(K)$ such that 
\begin{itemize}
\item[(i)] $e(\nabla u) \in \{e^{(i)}, \tilde{e}_1, \dots, \tilde{e}_4\} \subset \conv(K) \mbox{ in } \Omega_{\delta}(M)$ and
\begin{align*}
|\{x\in \Omega_{\delta}(M): e(\nabla u)(x)=e^{(i)}\}|= \frac{1}{4}|\Omega_{\delta}(M)|.
\end{align*}
\item[(ii)] $u(x)=Mx \mbox{ on } \R^2 \setminus \Omega_{\delta}(M)$.
\end{itemize}
\end{lem}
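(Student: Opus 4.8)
\textbf{Proof proposal for Lemma \ref{lem:replace1}.}

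The plan is to reduce the construction to a one-dimensional lamination problem in strain space and then realize it as a displacement via an explicit piecewise affine deformation on a rotated rectangle. First, I would exploit Lemma \ref{lem:hulls}: since $e(M)\in\intconv(K)=\{e^{(1)},e^{(2)},e^{(3)}\}^{lc}$, there is a sequence of rank-one (more precisely, symmetrised rank-one) lamination steps connecting $e(M)$ to the wells. Because $\intconv(K)$ is two-dimensional and $K$ consists of three points forming an equilateral triangle, one can write $e(M)$ as a convex combination involving $e^{(i)}$ (the nearest well, selected via \eqref{eq:nearest_well}) and a complementary strain $e'$ lying on the opposite edge of the triangle, and then write $e'$ itself as a convex combination of the other two wells. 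The condition $\epsilon_0<\dist(e(M),\partial\conv(K))/100$ guarantees that $e(M)$ is genuinely interior, so all the intermediate strains $\tilde e_1,\dots,\tilde e_4$ produced along the way stay inside $\intconv(K)$ with uniform margin, and in particular the construction does not degenerate. The fact that exactly the volume fraction $\tfrac14$ is hit in the well $e^{(i)}$ should come from choosing a double lamination in which $e^{(i)}$ is reached after two splittings, each contributing a factor $\tfrac12$; one then tunes the geometry (lengths of the laminate) so that the first splitting is exactly into equal halves and $e^{(i)}$ appears in one quarter.

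Second, I would pass from the purely algebraic lamination to an actual Lipschitz map on a domain. The standard device (Conti's construction, as cited) is: for a single symmetrised rank-one connection $e(A)-e(B)=a\odot n$, one can build on a thin rotated rectangle (long direction $\perp n$) a piecewise affine, continuous function whose symmetrised gradient alternates between values $e(A')$ and $e(B')$ close to $e(A)$ and $e(B)$, which agrees with the affine map $Mx$ on the boundary of the rectangle, and which introduces a controlled ``error'' $\delta$ in matrix space near the short ends where transition layers are needed. Iterating this twice — first laminating $e(M)$ into the pair $(e^{(i)}, e')$, then laminating the $e'$-region into $(e^{(j)},e^{(k)})$ — and carefully nesting the rectangles yields a function with symmetric gradient in $\{e^{(i)},\tilde e_1,\dots,\tilde e_4\}$, the $\tilde e_l$ being the slightly perturbed strains and transition strains, all still in $\intconv(K)$ by the interiority margin. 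Choosing all transition layers to have total measure below the prescribed tolerance controls the volume fractions; in particular one arranges the equal-halves splitting in the first step so that $e^{(i)}$ occupies exactly one quarter, while absorbing the geometric corrections into the remaining parts.

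The boundary condition $u(x)=Mx$ on $\R^2\setminus\Omega_\delta(M)$ is automatic from the Conti-type construction, since each rectangular building block is designed to match the ambient affine map on its boundary; gluing the nested rectangles preserves this. One then extends $u$ by $Mx$ outside, getting a globally Lipschitz map.

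The main obstacle I expect is the bookkeeping in the second paragraph: making the geometry of the \emph{nested} rectangles consistent — the inner lamination (of the $e'$-region) must itself fit into the strip produced by the outer lamination, the two rank-one directions $n$ for the two splittings are generically different (related to different edges of the triangle, hence to different vectors $n_{ij}^k$ from Lemma \ref{lem:rank_one_conn}), and one must verify that the transition regions can be made simultaneously small enough that (a) the exact fraction $\tfrac14$ in $e^{(i)}$ survives, and (b) all occurring strains remain inside $\intconv(K)$, which is exactly where the smallness hypothesis on $\epsilon_0$ relative to $\dist(e(M),\partial\conv(K))$ is used. The rest — continuity, Lipschitz bounds, the affine boundary matching — is routine once the combinatorial/geometric layout is fixed, and I would cite Conti's lemma and Section 2--3 of \cite{RZZ16} for the precise realization rather than redo it.
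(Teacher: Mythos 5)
Your overall strategy (Conti-type boundary-matching blocks on rotated rectangles) is the right family of tools, but the specific construction you sketch is not the one behind Lemma \ref{lem:replace1} (the paper does not reprove it; it is the modified Conti construction of Lemma 3.5 in \cite{RZZ16}), and as sketched it does not deliver the statement. The actual building block uses a \emph{single} symmetrised rank-one direction associated with the (almost) nearest well $e^{(i)}$: one fixes the volume fraction of $e^{(i)}$ to be exactly $\tfrac14$ and then \emph{solves} for the complementary gradients, which therefore end up either close to the original datum or pushed \emph{further into the interior} of $\conv(K)$ along that line (this is exactly the ``push-out'' feature the paper emphasises in Section \ref{sec:conv_int_1} and Figure \ref{fig:repl_colours}, and it is why the nearest well changes after finitely many iterations). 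The other two wells play no role at this stage. Your plan instead prescribes the complementary strain $e'$ on the opposite edge and then laminates it into $e^{(j)},e^{(k)}$; but in a laminate whose average must equal $e(M)$ (as required by the affine boundary data), the volume fractions are dictated by the barycentric weights of $e(M)$, so the $e^{(i)}$-region would occupy the fraction $\lambda_i$, not $\tfrac14$ — ``tuning the lengths'' cannot fix this without moving the complementary gradients, which is precisely the missing idea. The hypothesis on $\epsilon_0$ and the nearest-well condition \eqref{eq:nearest_well} are there to guarantee that the pushed-out matrices stay in $\intconv(K)$, not merely to give a generic interiority margin for perturbations near the other wells.

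A second concrete gap is the count of gradient values: the lemma asserts exactly five values, $e^{(i)}$ plus $\tilde e_1,\dots,\tilde e_4$, attained on a single rectangle. Each boundary-compatible Conti block already needs about four auxiliary gradients to interpolate to $Mx$; nesting a second block inside the $e'$-regions multiplies the number of distinct values (and adds further transition gradients), so your construction cannot reproduce the stated structure, nor the exact equality $|\{e(\nabla u)=e^{(i)}\}|=\tfrac14|\Omega_\delta(M)|$, which in the real construction is a rigid geometric feature of one block rather than something to be recovered after ``absorbing corrections''. If you want to write a complete proof, the route is: choose a symmetrised rank-one direction $a\odot n$ through $e(M)$ and $e^{(i)}$ (such directions exist since all differences are trace-free), set the complementary symmetric gradients to $e(M)+c_l\, a\odot n$ with coefficients determined by the averaging constraint with fixed fraction $\tfrac14$, verify via the $\epsilon_0$-margin that these stay in $\intconv(K)$, and realise the configuration by the explicit piecewise affine subdivision of the rectangle into triangles as in \cite{C}, \cite{RZZ16}.
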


\begin{figure}[t]
\includegraphics[width=7cm, page=4]{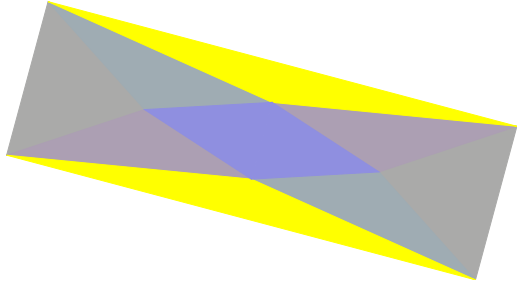}
\caption{
The replacement construction from Lemma \ref{lem:replace1}. The triangles correspond to the level sets of the replacement function (with different colours corresponding to different values of the symmetrised gradient). }
\label{fig:rectangle}
\end{figure}

The distribution of the symmetric gradients $\tilde{e}_1,\dots, \tilde{e}_4$ in strain space is illustrated in Figure \ref{fig:repl_colours}: On a fixed volume fraction of the domain the modified gradient is pushed exactly into one of the wells, but on the other parts of the domain it either remains close to the original boundary datum, or is pushed even further into the interior of the equilateral triangle spanned by $e^{(1)}, e^{(2)}, e^{(3)}$.

\begin{figure}[t]
\includegraphics[width=7cm, page=5]{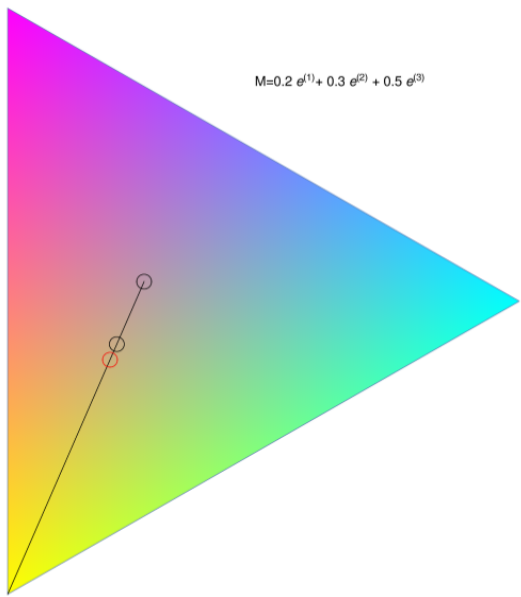}
\caption{
The replacement construction from \cite{RZZ16} depicted in CMY colour encoding. The colour triangle corresponds to the wells, in that $e^{(1)}$ is depicted as cyan, $e^{(2)}$ as magenta and $e^{(3)}$ as yellow. Correspondingly convex combinations are presented as blends of these colours. The algorithm from \cite{RZZ16} starts with a given boundary datum (here in the red circle) and replaces it by five new matrices, one exactly in the wells (here the yellow well), and four in the interior (one in the red and three further ones in the two black circles). The size of the circles represents the deviation from the originally chosen rank-one line. In particular, we note that some of the new data are ``pushed" further into the interior of the convex hull of $e^{(1)}, e^{(2)}, e^{(3)}$.}
\label{fig:repl_colours}
\end{figure}

In the context of Algorithm \ref{alg:conv_int} this implies the following: Given
the current displacement $u_k$, we modify it on each of its level sets
$\Omega_j^k$ by first covering $\Omega_j^k$ by finitely many (translations and
rescalings) of the domains $\Omega_{\delta}(\nabla u_k|_{\Omega_{j}^k})$ from
the Lemma \ref{lem:replace1} and a remainder, which consists of finitely many
triangles (this can always be achieved, c.f. the discussion of the function
$\Cover_v$ in Section \ref{sec:cover}). The number of domains in the covering is
chosen such that a sufficiently large volume fraction $v\in (0,1)$ is covered
($v=1$ would be ideal, but in general, this would require countably many sets, which is not
numerically feasible; in the quantitative convex integration algorithm from \cite{RZZ16} we also excluded this, as it would yield infinite surface energy in general). On the (translations and rescalings of the) domains $\Omega_{\delta}(\nabla u_k)$ we invoke Lemma \ref{lem:replace1}. 

In particular, the construction from Lemma \ref{lem:replace1} is such that the displacement $u_k$ is replaced by a displacement $u_{k+1}$ whose symmetric gradient is already exactly contained in one of the wells on a subset of $\Omega_{\delta}(\nabla u_k)$ of a given volume fraction (which is in particular uniformly bounded from below, e.g. by $\frac{1}{8}$). This set is no longer touched in the remainder of the Algorithm \ref{alg:conv_int}. Hence, the volume fraction of the original domain, on which the displacement gradient is modified in the $k$-th step, is bounded by $\left(1-\frac{v}{8}\right)^k$. In particular, for almost any point $x\in \Omega$ there exists an integer $k(x)$ such that the gradient $\nabla u_k(x)$ is already exactly in the wells after $k(x)$ iteration steps. 
On the complement of this set, i.e. on the set, where the displacements are not yet in the energy wells, we push a part of the gradient distribution further into the interior of $\intconv\{e^{(1)}, e^{(2)},e^{(3)}\}$ and on another part we remain in an $\epsilon_k$ neighbourhood of the original $e(\nabla u_k)|_{\Omega_{j}^k}$. In particular, on these parts of the domain, we either \emph{do not} improve or even deteriorate the underlying displacement construction (c.f. Figure \ref{fig:repl_colours}).
This is iterated.

Due to the ``pushing into the interior", after a finite number of steps, the closest well changes (i.e. the choice of $e^{(i)}$ in \eqref{eq:nearest_well} changes). This corresponds to a change in the rank-one direction in matrix space, which in turn is reflected in the orientation of the underlying domains $\Omega_{\delta}(\nabla u_k)$. In particular, here any possible orientation (in the union of certain cones, c.f. the blue cones in Figure 3 in \cite{RZZ16}) may arise in general, they are not prescribed by the rank-one directions (c.f. Lemma \ref{lem:rank_one_conn}) of the energy wells $\{e^{(1)}, e^{(2)}, e^{(3)}\}$. \\

The key features of the replacement construction in \cite{RZZ16} can hence be summarized as follows:
\begin{itemize}
\item It is ``finite", in the sense that for almost every point $x\in \Omega$ there exists $k(x)\in \N$ such that the iteration creates a gradient distribution for which $e(\nabla u_k)(x)$ already attains one of the values $e^{(1)}, e^{(2)}$ or $e^{(3)}$ after a finite number $k(x)$ of steps. After this has been achieved the gradient distribution at that point will not be altered anymore.
\item It is ``non-uniform", in that by using the ``pushing out" construction the gradient distribution is deteriorated on part of the domain (i.e. moved further away from the wells) in each iteration step.
\item It possibly involves a continuum of possible orientations and normals.
\end{itemize}

\begin{figure}[t]
\begin{center}
\begin{tabular}{p{1.5cm}|p{3cm} p{3cm} p{3cm}}
 & Number of iterations & Local convergence  & Number of directions\\
\hline
\cite{RZZ16} & finite depending on point & exactly attained, after non-uniform number of steps; partially pushed into the interior & continuum of directions in cones\\
\cite{RZZ17} & countably infinite & uniform, exponential convergence rate & finitely many directions\\
\hline
\end{tabular}
\end{center}
\caption{A comparison of the properties of the two replacement functions described above.}
\label{tab:compare}
\end{figure}

\subsubsection{The replacement construction from \cite{RZZ17}}
\label{sec:conv_int_2}

In the algorithm from \cite{RZZ17} the replacement construction is based on the following observation:

\begin{lem}[Lemmas 5.22 and 5.23 in \cite{RZZ17}]
\label{lem:replace2}
Let $M \in \R^{2\times 2}$ be such that $e(M)\in \intconv(K)$. Assume that
$\dist(M,K)=d_0$. Then, for a constant $ \gamma \in (0,1)$, there exists a domain $\Omega_{\delta}(M)$, which is diamond-shaped (but possibly rotated), and a piecewise affine map $w:\Omega_{\delta}(M) \rightarrow \R^2$ such that
\begin{itemize}
\item[(i)] $\dist(\nabla w, K)\leq \gamma d_0$,
\item[(ii)] $w(x)=Mx$ on $\partial \Omega_{\delta}(M)$.
\end{itemize}
\end{lem}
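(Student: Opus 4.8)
The plan is to perform a single ``replacement step'' by a simple laminate whose two values are strictly closer to $K$ than $M$ is, and then to glue this laminate to the affine map $x\mapsto Mx$ on a rhombus (the ``diamond'') by a Conti-type construction in the spirit of Lemma~\ref{lem:replace1}. As a preliminary reduction, since $K=\{e^{(1)},e^{(2)},e^{(3)}\}+\Skew(2)$ and the symmetric and skew parts of a matrix are orthogonal, one has $\dist(N,K)=\dist(e(N),\{e^{(1)},e^{(2)},e^{(3)}\})$ for every $N$, and a solution $w$ for the symmetric matrix $e(M)$ yields the solution $w+\tfrac12(M-M^{t})x$ for $M$; so we may assume that $M=e(M)$ is symmetric, and it is trace free because $e(M)\in\conv(\{e^{(1)},e^{(2)},e^{(3)}\})$ and the three wells are. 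Fix a well $e^{(i)}$ with $|M-e^{(i)}|=d_{0}$.

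For the lamination, I would exploit that, in the two-dimensional space of symmetric trace-free $2\times2$ matrices, the wells form an equilateral triangle each of whose edges is a symmetrised rank-one segment between two wells: a symmetric trace-free matrix has nonpositive determinant and is therefore of the form $\tfrac12(u\otimes v+v\otimes u)$ (consistent with Lemma~\ref{lem:rank_one_conn}, and using that only the symmetrised gradient is constrained in \eqref{eq:hex_rhom}). Pick one of the two edges $[e^{(i)},e^{(j)}]$ meeting at $e^{(i)}$ and let $L$ be the line through $M$ parallel to $e^{(j)}-e^{(i)}$. Taking $P,Q$ to be the orthogonal projections of $e^{(i)},e^{(j)}$ onto $L$, one checks --- using that $M$ lies in the triangle --- that $M\in[P,Q]$, say $M=\lambda P+(1-\lambda)Q$ with $\lambda\in[0,1]$, while $P-Q$ is a multiple of $e^{(j)}-e^{(i)}$. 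If $\theta\in[0,\tfrac{\pi}{3}]$ denotes the angle between $M-e^{(i)}$ and $e^{(j)}-e^{(i)}$, then $|P-e^{(i)}|=|Q-e^{(j)}|=d_{0}\sin\theta$, so
\[ \dist(P,K),\ \dist(Q,K)\ \le\ d_{0}\sin\theta\ \le\ d_{0}\sin\tfrac{\pi}{3}\ <\ d_{0}, \]
and choosing, among the two edges at $e^{(i)}$, the one making the smaller angle with $M-e^{(i)}$ (the two angles summing to $\tfrac{\pi}{3}$) even produces the factor $\sin\tfrac{\pi}{6}=\tfrac12$; an arbitrarily small perturbation keeps $P,Q\in\intconv(\{e^{(1)},e^{(2)},e^{(3)}\})$, so that the step is iterable.

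Since $e^{(j)}-e^{(i)}$ is only symmetrised rank one, I would restore full-gradient compatibility using the unconstrained skew directions: there is a genuine rank-one matrix $R=a\otimes n$ with $e(R)=P-Q$ (two choices, by Lemma~\ref{lem:rank_one_conn}); with $W:=\tfrac12(R-R^{t})$ the skew part of $R$, set $A:=P+(1-\lambda)W$ and $B:=Q-\lambda W$, so that $A-B=a\otimes n$ is rank one, $\lambda A+(1-\lambda)B=M$, and $\dist(A,K)=\dist(P,K)$, $\dist(B,K)=\dist(Q,K)\le\gamma d_{0}$ with $\gamma<1$. It remains to realise the simple laminate with values $A,B$, normal $n$ and volume fraction $\lambda$ on a (possibly rotated) diamond $\Omega_{\delta}(M)$ --- of any prescribed size --- with the \emph{exact} affine boundary datum $Mx$. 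This is carried out exactly as in the construction underlying Lemma~\ref{lem:replace1}: one subdivides $\Omega_{\delta}(M)$ into finitely many triangles on which $\nabla w$ is constant, with interfaces parallel to $n$ so that the Hadamard jump condition holds and $w$ is continuous, and matches $Mx$ on $\partial\Omega_{\delta}(M)$ using a bounded number of auxiliary gradient values near the corners, with no loss of measure. The point of the construction in \cite{RZZ17} --- as opposed to that in \cite{RZZ16} --- is that the abundance of rank-one connections in the hexagonal-to-rhombic geometry allows these auxiliary values to be kept in $\intconv(\{e^{(1)},e^{(2)},e^{(3)}\})$ and within $\gamma d_{0}$ of $K$ (enlarging $\gamma<1$ if needed); this gives (i), while (ii) holds by construction.

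The hard part is twofold. First, the contraction factor $\gamma$ must be made \emph{uniform in $M$} and bounded away from $1$: the naive choice of lamination edge degenerates (for $M$ on an edge, or ``above'' a vertex, it yields no improvement), so the edge has to be selected adaptively and a short case analysis on the position of $M$ in the equilateral triangle, using the $\tfrac{\pi}{3}$ angles, is required. Second, one must produce the explicit triangulation of the diamond realising the exact affine boundary condition while keeping \emph{every} occurring gradient within $\gamma d_{0}$ of $K$ --- this is the combinatorial and geometric core of Lemmas~5.22 and 5.23 in \cite{RZZ17}, and is precisely what renders the scheme ``uniform'' with only finitely many lamination directions (in contrast to the continuum of directions needed in \cite{RZZ16}).
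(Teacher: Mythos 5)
First, a point of comparison: this paper does not actually prove Lemma \ref{lem:replace2}; it is quoted from Lemmas 5.22 and 5.23 of \cite{RZZ17} and only described informally (Section \ref{sec:conv_int_2}, Remark \ref{rmk:uniform}, Figure \ref{fig:diamond}), so your attempt can only be measured against that construction. Your preliminary reductions are fine: $\dist(N,K)=\dist(e(N),\{e^{(1)},e^{(2)},e^{(3)}\})$ by orthogonality of symmetric and skew parts, the choice of the nearest well and of the adjacent edge making the smaller angle gives a uniform contraction $\sin(\pi/6)=\tfrac12$ for the two laminate values (consistent with Remark \ref{rmk:uniform}), and the lift of the symmetrised rank-one connection to a genuine rank-one connection via the free skew part is correct. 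The genuine gap is the step you defer: producing the piecewise affine map on the diamond which attains the \emph{exact} affine datum $Mx$ on the boundary while keeping \emph{every} occurring gradient, including the auxiliary ones needed to close up the boundary, within $\gamma d_0$ of $K$ for a uniform $\gamma<1$. Invoking "the construction underlying Lemma \ref{lem:replace1}" does not deliver this: the Conti-type construction of Lemma \ref{lem:replace1} controls its auxiliary gradients only in the sense that they stay in $\conv(K)$, and some of them are pushed \emph{further away} from the wells --- this is precisely the non-uniformity of \cite{RZZ16} that the scheme of \cite{RZZ17} is designed to avoid. In \cite{RZZ17} the uniform bound on the auxiliary gradients comes from a specific mechanism you never use: the diamond-shaped building block is taken with a small (but uniform) aspect ratio, and the deviation of the interpolating gradients from the laminate values is controlled by that aspect ratio (cf. the caption of Figure \ref{fig:zoomed}); the "size" of the domain, which you invoke, is irrelevant by scaling. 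Since you yourself label this step "the combinatorial and geometric core" and leave it unproved, the proposal reduces the lemma to its hardest part rather than establishing it.

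A secondary error: the orthogonal projections $P,Q$ of $e^{(i)},e^{(j)}$ onto the line $L$ generally lie \emph{outside} $\conv(\{e^{(1)},e^{(2)},e^{(3)}\})$, at distance comparable to $d_0\sin\theta$ (in coordinates with $e^{(i)}$ at the origin and the chosen edge along the first axis, $P=(0,y_M)$ violates the constraint from the other edge by $y_M/2$). Hence "an arbitrarily small perturbation keeps $P,Q\in\intconv$" is false. This does not affect (i)--(ii) as literally stated, since only the distance to $K$ enters, but it invalidates your iterability claim and the assertion that all gradients can be kept in the convex hull. The repair is to replace $P,Q$ by the intersections of $L$ with the two remaining edges of the triangle, which lie on $\partial\conv(K)$ and still give a uniform factor (at most $\tfrac{2}{\sqrt 3}\sin\tfrac{\pi}{6}=\tfrac{1}{\sqrt 3}<1$), after which a genuinely small inward perturbation lands in $\intconv(K)$.
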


\begin{rmk}
\label{rmk:uniform}
Regarding the constant $\gamma \in (0,1)$, we remark that it can be chosen in a uniform way in the sense that there exists $\sigma \in (0,1)$ (independent of $M$; almost $1/2$ up to $\epsilon$ errors) such that the distance between $\nabla w$ and $K$ improves by a factor $\sigma$, where the distance is measured in
$l^{\infty}$ with respect to the convex coefficients of $M= \lambda_{i}e^{(i)}$.
\end{rmk}

\begin{figure}[t]
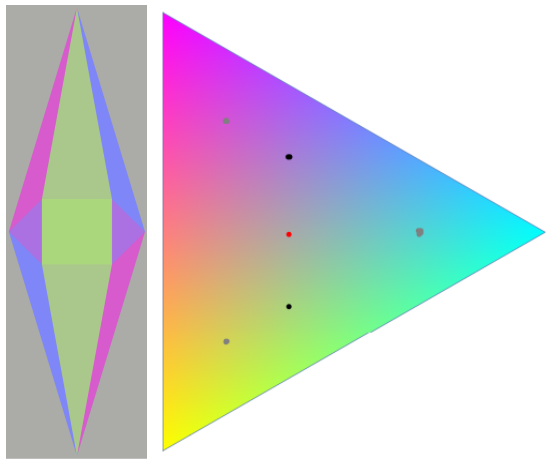

\includegraphics[height=6cm, page=6]{figures.pdf}
\includegraphics[height=6cm, page=7]{figures.pdf}
\caption{
The replacement construction from Lemma \ref{lem:replace2}. Again the triangles
correspond to the level sets of the replacement function (with different colours
corresponding to different values of the symmetrized gradient).
On the right we have plotted the position of the new matrices with respect to the boundary datum 
in black and the level sets of a further iteration step in grey. We note the ``uniformity" of the algorithm, which is reflected in the fact that the successive iterations approach the wells uniformly (in barycentric coordinates). In order to highlight this uniformity, we have chosen a comparatively small value of the error in matrix space (significantly smaller than the one used for the left panel).}
\label{fig:diamond}
\end{figure}

The construction of Lemma \ref{lem:replace2} is illustrated in Figure \ref{fig:diamond}. We
remark that the directions along which $\nabla w$ is modified and along which $\Omega_{\delta}(M)$ is oriented correspond to the
rank-one directions from Lemma \ref{lem:rank_one_conn}. In particular, only
finitely many of these directions occur. \\

As before, in the application of Lemma \ref{lem:replace2} in Algorithm \ref{alg:conv_int}, we first
cover a sufficiently large volume fraction of the domains $\Omega_j^k$ by
finitely many translations and rescalings of the sets $\Omega_{\delta}(\nabla
u_k|_{\Omega_j^k}) \in \Cover_v(\Omega_j^k, \nabla u_k|_{\Omega_j^k})[1]$. On these we apply the
replacement construction from Lemma \ref{lem:replace2}. We note that in contrast
to the scheme from Section \ref{sec:conv_int_1}, the new piecewise affine
deformation is in general \emph{not} exactly contained in the energy wells on
any of its level sets -- not even on a part of the underlying domain. In
particular, the volume of the domain on which the displacement gradients have to
be improved in the next step, does not decrease; in the above sense this
algorithm is not ``finite" but ``countable". However, in contrast to the
previous scheme, we now improve the gradient distribution ``uniformly" and (essentially)
dyadically on all of its level sets (on which it is modified), which yields geometric convergence of the
gradient distribution. On the modified domain the gradient distribution is
always improved by a factor of (nearly) two with respect to an $l^{\infty}$-based metric on the barycentric coordinates (c.f. property (i) in Lemma \ref{lem:replace2}).\\

Hence the properties of this scheme can be summarized as follows:
\begin{itemize}
\item It is ``countable", in the sense that in general the construction does not push the gradient distribution exactly into one of the wells within a finite iteration time.
\item It is ``uniform", in that the gradient distribution is always improved by a factor two in the whole domain on which the function $\Replace$ is acting on. 
\item It involves a finite set of possible normals and orientations.
\end{itemize}

We emphasize that in both replacement constructions we obtain good control on the skew part. While this plays a role in the quantitative argument in \cite{RZZ16}, it is not of major importance in our qualitative scheme. Hence, we do not discuss this further here.

\subsection{The covering constructions}
\label{sec:cover}

As the form of the replacement function $\Replace$
has already been discussed in the previous section, it remains to describe our implementation
of the function $\Cover_v$. Given a triangle $T \in \mathcal{T}$ and a matrix $M \in \mathcal{M}$, i.e. $e(M)\in \intconv(\{e^{(1)}, e^{(2)}, e^{(3)}\})$, we essentially proceed by a greedy algorithm. We cover $T$ by a grid of dyadic scales
consisting of dyadic rescalings and translations of the domains $\Omega_{\delta}(M)$ from Lemmas \ref{lem:replace1} and \ref{lem:replace2}. If a grid parallelogram is a strict subset of $T$, we keep it and add it to the
list $\Cover_v(T, M)[1]$. If a parallelogram is a strict subset of $T^c$, we discard the parallelogram. For all parallelograms intersecting the boundary of $T$, we dyadically refine the parallelogram, and repeat the process on these refined parallelograms. 
We iterate this until we have covered a volume fraction of size at least $v \in (0, 1)$. At each step, we have a list $\Cover_v(T, M)[1]$ of ``good" parallelograms (green in Figure \ref{fig:cover}), the discarded parallelograms which do not need to be stored any longer (black in Figure \ref{fig:cover}), and a remaining list of parallelograms to be further refined (white in Figure \ref{fig:cover}). As our shapes are convex polytopes, efficient intersection algorithms can be implemented.
The remainder of the domain, which is not covered by this procedure is split into
triangles, which are inserted into $\Cover_v(T, M)[2].$

\begin{figure}[t]
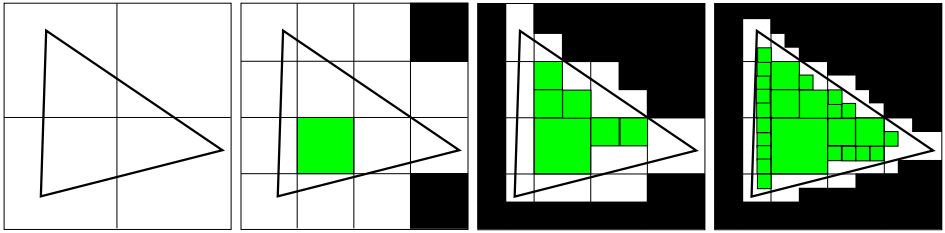

\includegraphics[width=3cm, page=8]{figures.pdf}
\includegraphics[width=3cm, page=9]{figures.pdf}
\includegraphics[width=3cm, page=10]{figures.pdf}
\includegraphics[width=3cm, page=11]{figures.pdf}
\caption{An illustration of the covering algorithm which is encoded in the function $\Cover_v$: We essentially use a greedy algorithm and distinguish between ``good" (green), remaining (white) and discarded (black) parallelograms.}
\label{fig:cover}
\end{figure}

\section{Numerical Implementation of the Convex Integration Schemes for the Hexagonal-to-Rhombic Phase Transformation}
\label{sec:numerics}

In this section, we present some of the output of Algorithm \ref{alg:conv_int} in both of the variants described above. Here we use a colour coding based on the CMYK colour model, where the colours cyan, magenta and yellow (we do not use key) correspond to one of the wells respectively. More precisely, the well $e^{(1)}$ corresponds to cyan, the well $e^{(2)}$ to magenta, and $e^{(3)}$ to yellow.
A matrix which is included in the interior of the convex hull is correspondingly
depicted as a convex combination of these colourings (c.f. Figure
\ref{fig:repl_colours}).

We remark that in commercial printing the CMYK colour model is used to blend colours as mixtures of the
primary colours cyan, magenta, yellow (and black). In particular,
for our purposes this is very convenient, as it allows us to work with ``barycentric coordinates" based on cyan, magenta and yellow. 
Moreover, too fine
structures in our construction, which are hard to see in the picture due to limited
resolution and rasterisation, are then ``homogenised" automatically in a way which agrees with the colours of the averaged matrix values. 

\subsection{Output of the implementation of Algorithm \ref{alg:conv_int}}

We compare the output of the two variants of Algorithm \ref{alg:conv_int} for three different scenarios. These are chosen such that the overall boundary data $M \in \intconv(\{e^{(1)}, e^{(2)},e^{(3)}\})$ are such that either (c.f. Figure \ref{fig:location})
\begin{itemize}
\item[(i)] $M$ is close to the barycenter $\frac{1}{3}e^{(1)}+\frac{1}{3}e^{(2)}+\frac{1}{3}e^{(3)}=0$ of the equilateral triangle spanned by $e^{(1)}, e^{(2)}, e^{(3)}$.
\item[(ii)] $M$ is close to one of the wells.
\item[(iii)] $M$ is close to the boundary of $\intconv(e^{(1)},e^{(2)}, e^{(3)})$ and essentially in between two wells.
\end{itemize}
We remark that, due to the $Skew(2)$ symmetry, our construction in Algorithm 3.3
essentially only depends on the symmetric part $e(M)$ of the matrix $M$ (the
skew part only amounts to orthogonal translations of the matrices). We discuss the model cases (i)-(iii) separately in the sequel.

\begin{figure}[t]
\includegraphics[width=4cm, page=12]{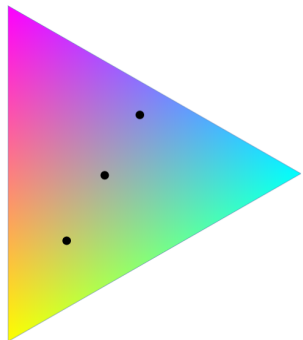}
\caption{The three cases (i)-(iii). The matrices marked in the triangle show the location of the boundary data from cases (i)-(iii) in the equilateral triangle spanned by $e^{(1)}, e^{(2)}, e^{(3)}$.}
\label{fig:location}
\end{figure}

\subsubsection{An example of case (i)}
As a first case, we discuss boundary data which are chosen to lie very close to the barycenter of the equilateral triangle spanned by the matrices $e^{(1)}, e^{(2)}, e^{(3)}$. 

\begin{figure}[t]
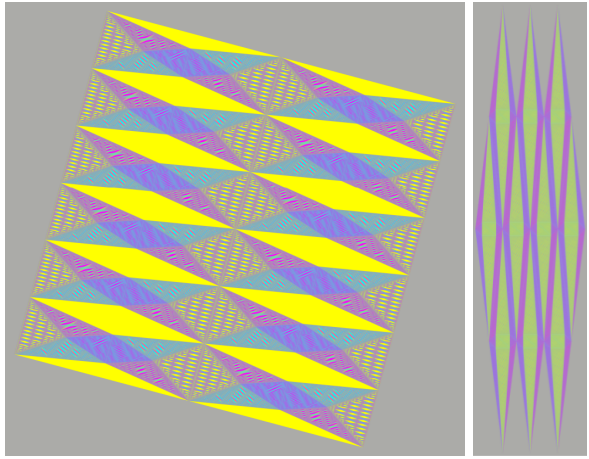

\includegraphics[height=6 cm, page=13]{figures.pdf}
\includegraphics[height=6 cm, page=14]{figures.pdf}
\caption{Comparison of the algorithms from \cite{RZZ16}, \cite{RZZ17} in the case (i): Both pictures illustrate the solution of Algorithm \ref{alg:conv_int} with the choice of boundary data of the form
$M= 0.33 e^{(1)} + 0.33 e^{(2)} + 0.34 e^{(3)}.$
Hence, the boundary data are a quite uniform mixture of all three wells. In the
square construction, the largest patches however correspond to the majority
phase, i.e. yellow, while the diamond construction displays a more ``fine scale"
structure (c.f. Figure \ref{fig:zoomed} for a detailed view). Since the boundary values are close to the barycenter of the equilateral triangle spanned by $e^{(1)}, e^{(2)}, e^{(3)}$, the diamond construction does not yet display the effects of the strong geometric convergence, which it has for data closer to the wells.}
\label{fig:case_i}
\end{figure}

\begin{figure}[h]
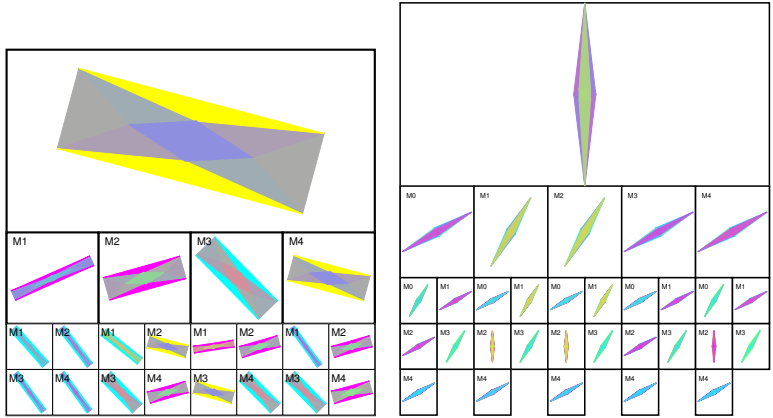

\includegraphics[width=0.4\linewidth, page=15]{figures.pdf}
\includegraphics[width=0.4\linewidth, page=16]{figures.pdf}
\caption{The replacement constructions which are used in the algorithm leading
  to the constructions in Figure \ref{fig:case_i}. The left panel shows the
  iteration of the replacement constructions, which are used in the left panel of
  Figure \ref{fig:case_i} (corresponding to the algorithm from \cite{RZZ16}), while the right panel shows the iterative replacement
  constructions used in the iteration leading to the right panel of Figure \ref{fig:case_i}, i.e. the algorithm from \cite{RZZ17}. In both situation the configurations replacing the previous ones are arranged below each other. The labelling $M_0, M_1, \dots, M_4$ indicates which of the matrices in the previous step is replaced by the corresponding new replacement construction. Although both settings involve five different level sets in each replacement construction, we have chosen to plot only four of these in the left panel, as the fifth corresponds to one of the wells, which is not changed at later iteration steps and can be easily inferred from the previous step. Although it
  is not always easy to detect this by the ``eye norm", the constructions which are used in the left panel are slightly rotated with respect to each other (e.g. the different replacement constructions involving yellow), while there are only a discrete number of orientations in the left panel.}
\label{fig:case_i1}
\end{figure}

In the implementations of the two algorithms from \cite{RZZ16} and \cite{RZZ17} we see major differences, which are illustrated in Figure \ref{fig:case_i}. As discussed in Section \ref{sec:replace} we observe the following differences:
\begin{itemize}
\item finiteness: The construction of \cite{RZZ16} is finite, which is
  manifested in the fact that already after three iteration steps there are
  large patches (approximately $20$ percent), in which the final colouring is attained (in Figure \ref{fig:case_i} for instance there are large yellow patches). In contrast to this, the algorithm from \cite{RZZ17} is countably infinite and although in the fine structure one already sees colours close to the wells, these are not exactly in the wells in general. In particular this yields a very fine-scale structure, which  ``homogenises" in the ``eye norm". 
  %(c.f. the complementary material \cite{Comp} for higher resolution pictures). 
\item uniformity: In the implementation of the algorithm \cite{RZZ16} the replacement constructions always contain patches which are ``pushed away" into the interior (c.f. Figure \ref{fig:repl_colours}). In the implementation of the algorithm from \cite{RZZ17}, we in contrast see that the replacement constructions use constructions whose colours are increasingly close to the wells (c.f. Figure \ref{fig:diamond}, right panel).
\item normal directions: While the normals which are used in the algorithm from \cite{RZZ16} may vary in a continuum, the ones in the algorithm from \cite{RZZ17} are discrete. Although this is hard to see by the ``eye norm", it is manifested in the replacement constructions shown in Figure \ref{fig:case_i1}.  
\end{itemize}
Both constructions are of self-similar, fractal structure (however leading to quite different overall structures).

\begin{figure}[htbp]
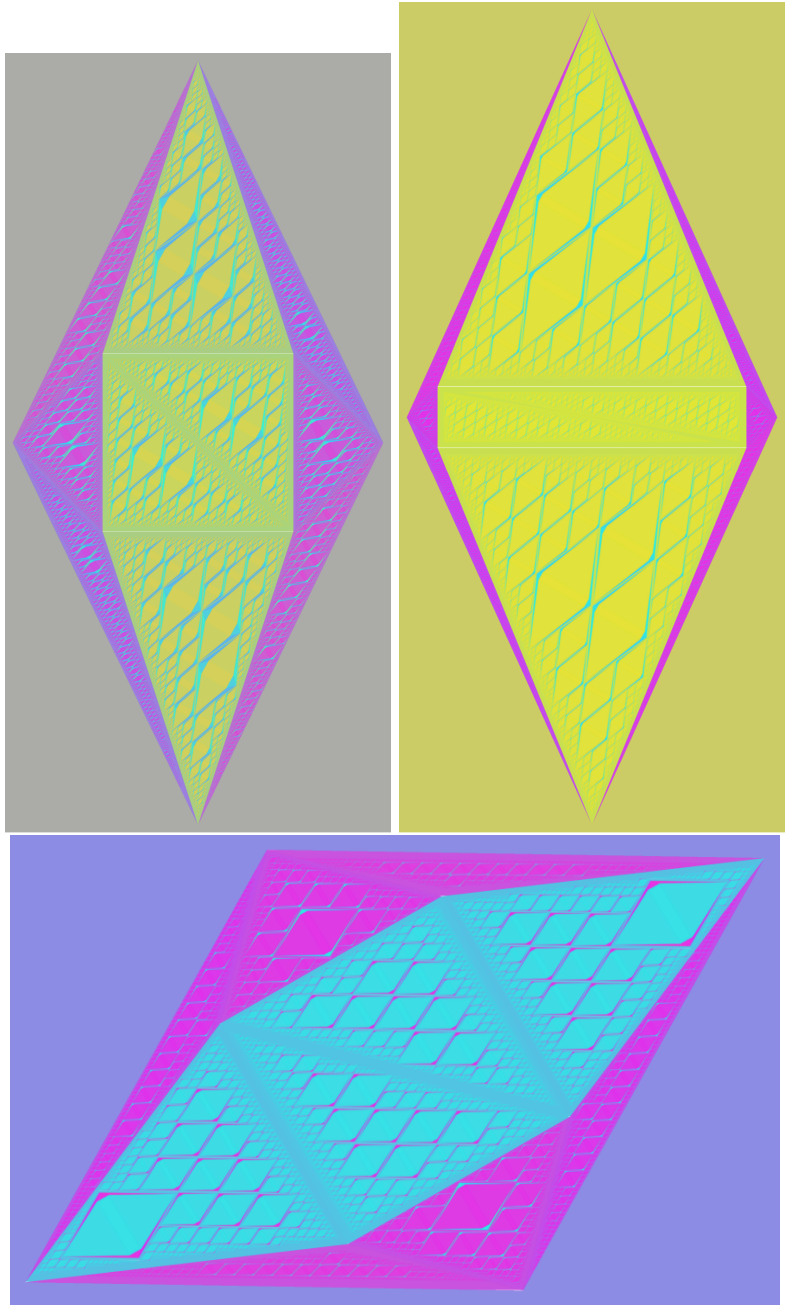

  \centering
  \includegraphics[width=0.4\linewidth, page=17]{figures.pdf}
  \includegraphics[width=0.4\linewidth, page=18]{figures.pdf}
  \includegraphics[width=0.8\linewidth, page=19]{figures.pdf}
  \caption{The construction in \cite{RZZ17} requires a small (but uniform) aspect ratio of the building block constructions in order to
    control the errors in matrix space. This results in very fine structures (c.f. Figures \ref{fig:case_i}, \ref{fig:case_ii}, \ref{fig:case_iii}, right panel).
    For illustration purposes, in the present figure we hence show the same constructions but
    with an artificially increased aspect ratio (we have roughly increased the real aspect ratio by a factor 10). From top to bottom and left to right this corresponds to the cases (i), (ii), (iii) discussed below.}
  \label{fig:zoomed}
\end{figure}

\subsubsection{An example of case (ii)}

\begin{figure}[h]
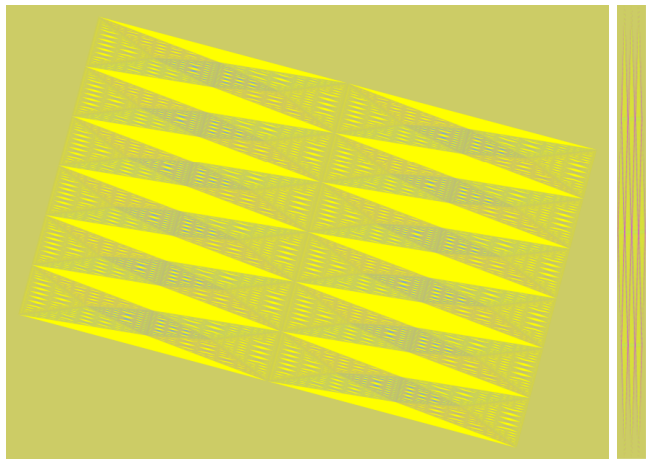

\includegraphics[height=6cm,page=20]{figures.pdf}
\includegraphics[height=6cm,page=21]{figures.pdf}
\caption{Comparison of the algorithms from \cite{RZZ16}, \cite{RZZ17} in the
  case (ii): In this illustration of the algorithm we have chosen the boundary
  data to be $M=0.2 e^{(1)} +0.2 e^{(2)} + 0.6 e^{(3)}$. One of the most
  striking differences after three iteration steps between the algorithms from
  \cite{RZZ16} (left panel) and from \cite{RZZ17} (right panel), is the fact
  that the first algorithm still essentially stays in a neighbourhood of a
  single well (only in the third iteration step a very small fraction of cyan emerges on small scales). In contrast, the algorithm from
  \cite{RZZ17} always uses two wells in each iteration step. In particular, one clearly sees the emergence of magenta on the outer structures of the diamond and after zooming in also cyan can be recognized (c.f. Figure \ref{fig:zoomed}, second panel, for a detailed view).
}
\label{fig:case_ii}
\end{figure}

In addition to the differences which had already been discussed in the case (i),
the most striking point which can is reflected in the implementations of the case
(ii) is the ``inertia" which is underlying the algorithm from \cite{RZZ16}
before the well is changed. This can best be understood by recalling the
replacement algorithm from Section \ref{sec:replace}, which is also illustrated
in Figure \ref{fig:repl_colours}. Here the original boundary data are replaced
by five new matrices, of which some are ``pushed further into the interior" of
the convex hull of the strains $e^{(1)}, e^{(2)}, e^{(3)}$. This implies that
the closest well eventually changes. However, since the push-out is achieved
with a fixed factor, this is slower, the closer the original boundary datum had
been to one of the wells. In particular, if one starts with a matrix very close to one of the wells, it might take a large number of iteration steps before any other well is used (as the algorithm will have reached very small scales by then, it is also hard to see this, even if this happens).
   
In contrast to this, the algorithm from \cite{RZZ17} uses values close to two of the wells in each iteration step: Hence even if one starts close to a single well, the other wells will emerge within the next two iteration steps (c.f. the second panel of Figure \ref{fig:zoomed}).

Again both of the constructions (the one from \cite{RZZ16} and the one from \cite{RZZ17}) already display fractal behaviour.

\subsubsection{An example of case (iii)}

Last but not least, we discuss the setting in which the boundary data are very close to the rank-one line between two wells. Here both constructions mainly consist of shades of cyan and magenta. The angle between the closest well and the boundary is very flat. Hence, we expect that the construction from \cite{RZZ16} (left panel in Figure \ref{fig:case_iii}) is forced to use very fine scales depending on the boundary distance. In particular, the regularity in the finely twinned areas should be bad, eventually yielding regularities which are significantly rougher than the ones from cases (i), (ii) (c.f. Figures \ref{fig:case_i}, \ref{fig:case_ii}). In contrast, the construction from \cite{RZZ17} (right panel in Figure \ref{fig:case_iii}) is not significantly more complex than the constructions from Figures \ref{fig:case_i}, \ref{fig:case_ii}.

\begin{figure}[t]
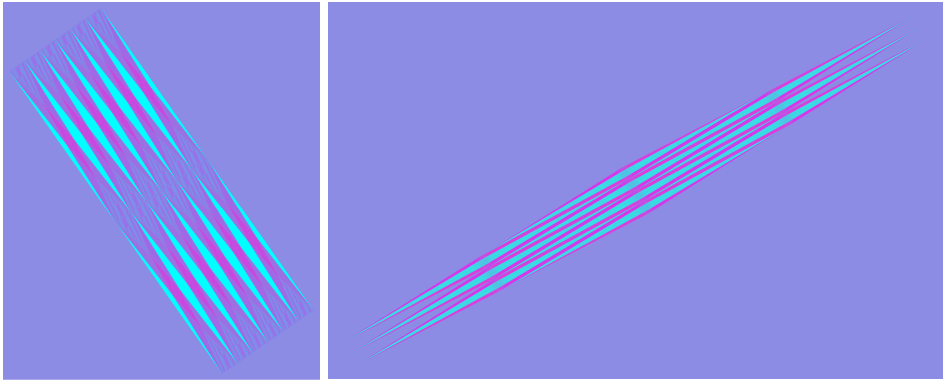

\includegraphics[height=5cm, page=22]{figures.pdf}
\includegraphics[height=5cm, page=23]{figures.pdf}
\caption{
An implementation of the convex integration algorithm for boundary data which are very close to the boundary of the convex hull of the wells. The boundary data are chosen to be
$M=0.45 e^{(1)} + 0.45 e^{(2)} + 0.1 e^{(3)}$. The left panel shows the
implementation of the algorithm from \cite{RZZ16}, the right panel the one from
\cite{RZZ17}. The complexity of the right panel is comparable to the
constructions from cases (i), (ii). Although this is not easy to detect by the
eye norm, the complexity of the figure in the left panel is significantly more
complex than the ones from these cases: The scales replacing the old structures
are so fine, that they are homogenized by rasterisation in pictures of algorithm and are
hence hardly detectable (c.f. Figure \ref{fig:zoomed} for a detailed view).
}
\label{fig:case_iii}
\end{figure}

\section{Discussion and Summary}
\label{sec:discuss}

We conclude the article by comparing our findings with some related models. Here we in particular return to their relation to scaling laws and comment on their relation to the Ball-Planes model \cite{BCH15,TIVP17}.

\subsection{Relation to scaling limits}
As already discussed in the introduction, scaling laws for energies such as \eqref{eq:min_surf} play an important role in the mathematical analysis of shape-memory materials. It is not known whether energies which include surface terms and which hence penalise too high oscillations are compatible with convex integration solutions. In our Theorem \ref{thm:lower_bd} we could however relate the presence of lower bounds to regularity thresholds for convex integration solutions. In this sense, models involving surface energies could be interpreted as a \emph{selection mechanism}.

\subsection{Probabilistic models on nucleation}

An interesting, probabilistic model for nucleation displaying ``wild" structures has been proposed by the groups around J. Ball and A. Planes and E. Vives \cite{BCH15, TIVP17}. This model takes the dynamic process of nucleation into account and assumes that in the nucleation process the different phases are nucleated according to the following rules (there are different subschemes of this): Let $\Omega \subset \R^2$ be a bounded Lipschitz domain representing the sample. The model is then based on the following algorithm:
\begin{itemize}
\item Choose a random point $x\in \Omega$.
\item According to a prescribed probability distribution, choose from a set of finitely many possible orientations, which also each have an associated colour. 
Physically, these essentially model the rank-one lines between the wells and hence also the variants of martensite associated with the given orientation. 
\item ``Nucleate" the chosen variant of martensite with the prescribed orientation at the point $x\in \Omega$. This is achieved by inserting a line segment (with or without prescribed width) with a prescribed associated colour (for each orientation) through the point $x\in \Omega$ of the prescribed orientation. This line segment is maximally extended through the point $x\in \Omega$ with the chosen orientation until it hits the boundary of $\Omega$ or another already present line, in which case it terminates.
\end{itemize}
It is assumed that the transformation process is irreversible, i.e. an already inserted line cannot be erased in the explained algorithm.

This seemingly ``simple" process is used to explain the lack of a fixed length scale, the experimentally observed fractal behaviour and the intermittency in the nucleation process of martensite \cite{CMOPV98,OCGVMP95,GMR10}. A certain ``universality" of the distribution of the lengths of line segments is identified \cite{BCH15, TIVP17}. However, compatibility considerations do \emph{not} enter the model.\\

Our geometrically more complicated setting is not viewed as a dynamic process (although it might be possible to give a dynamic meaning to the iteration parameter $k \in \N$ in the convex integration algorithm). However, in contrast to the Ball-Planes-Vives model, we include compatibility in our considerations. The lack of a fixed length scale and the fractal behaviour are also reflected in our convex integration solutions. We also expect fat tailed distributions for the Fourier transforms of our length scale distribution functions, c.f. \cite{PLKK97}. Mathematically this and the universality can be viewed as an (ir)regularity result. It would be interesting to see whether there is a closer relation between the two models and whether the two models could be combined.

We note that both models produce highly non-unique solutions. Yet, it might be possible to identify \emph{statistical} properties characterising these (possibly uniquely).

\subsection{Experiments}
It is not known whether convex integration solutions really appear in nature. Although, there are indications that in for instance the cubic-to-monoclinic phase transformation, the nucleation of martensite in austenite creates quite wild microstructures \cite{I}, it would be interesting to further compare the experimental results with numerical/analytical predictions.

\subsection{Summary}
In the present note we have presented two main results: 

On the one hand, we have related the maximal possible regularity of convex integration solutions to the existence of lower bounds in the associated scaling laws. This could present a possibility of deducing bounds on the maximal regularity of convex integration solutions (at least in ``sufficiently simple" models for which one can hope to obtain scaling laws).

On the other hand, we have presented a first implementation of convex integration solutions in the context of the hexagonal-to-rhombic phase transformation, which we view as a model problem. We have compared the numerical outputs of the two algorithms from \cite{RZZ16} and \cite{RZZ17}.

Of course this is only a first modest step towards an improved understanding of convex integration solutions, which we seek to further study in future work. Although experimental results on nucleation suggest quite wild structures in certain phase transformations \cite{I}, we have no direct comparison between these solutions and numerical predictions yet (which in particular is expected to be difficult to obtain due to the intrinsic non-uniqueness of convex integration solutions; a possible remedy could be the \emph{statistical} analysis of these solutions). We seek to pursue this comparison in future work. In particular, we emphasise that the question whether convex integration solutions arise in nature in the context of phase transformations in shape-memory alloys remains an exciting open problem.

\section{Appendix}
\label{sec:append}

In this last section, for self-containedness, we recall a possible proof of the fractional Poincar\'e inequality (for fractional Besov type spaces), which is used in different forms in Section \ref{sec:rigidity_and_scaling}. We start with the $L^2$ based version:

\begin{lem}
\label{lem:frac_poinc_L2}
Let $s\in (0,1)$ and $n\geq 1$.
Let $\Omega \subset \R^n$ be an open, bounded $C^{1,1}$ domain.
Let $u\in H^{s}(\R^n)$ with  $\supp(u) \subset \overline{\Omega}$ and denote $\Omega_{\delta}=\{x\in \Omega: \dist(x,\partial \Omega)\geq \delta\}$ for $\delta>0$.
Then, there exists $C=C(s,\Omega,n)>1$ such that
\begin{align}
\label{eq:L2Poinc}
\|u\|_{L^2(\Omega_{\delta}\setminus \Omega_{2\delta})}
\leq C \delta^{s} \|u\|_{H^{s}(\R^n)}.
\end{align}

\end{lem}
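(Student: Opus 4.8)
The plan is to reduce the estimate to the Gagliardo seminorm description of $H^{s}(\R^{n})$ and to exploit that, since $\supp(u)\subset\overline{\Omega}$, every point of the annular region $\Omega_{\delta}\setminus\Omega_{2\delta}$ lies within distance $2\delta$ of the zero set of $u$. First I would recall that for $s\in(0,1)$ the Fourier-defined space $H^{s}(\R^{n})$ coincides with $W^{s,2}(\R^{n})$ with equivalent norms (c.f. the discussion in Section \ref{sec:FS} and \cite{BM12}), so that it suffices to bound $\|u\|_{L^{2}(\Omega_{\delta}\setminus\Omega_{2\delta})}$ by $C\delta^{s}$ times the seminorm
\[
[u]_{H^{s}(\R^{n})}:=\left(\int_{\R^{n}}\int_{\R^{n}}\frac{|u(x)-u(y)|^{2}}{|x-y|^{n+2s}}\,dy\,dx\right)^{1/2}.
\]

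The key geometric input is the \emph{uniform exterior ball condition}, which holds because $\Omega$ is a bounded $C^{1,1}$ domain: there exists $r_{0}=r_{0}(\Omega)>0$ such that every boundary point admits an exterior ball of radius $r_{0}$. For $\delta<r_{0}$ and any $x\in\Omega_{\delta}\setminus\Omega_{2\delta}$, choosing a nearest boundary point $p$ (with $|x-p|\le 2\delta$) and shrinking the corresponding exterior ball along the outer normal at $p$ produces a ball $B_{x}\subset\R^{n}\setminus\overline{\Omega}$ of radius $\delta$ with $\sup_{y\in B_{x}}|x-y|\le 4\delta$. Since $u\equiv0$ a.e. on $B_{x}$, averaging over $B_{x}$ and using $|x-y|\le 4\delta$ there gives, for a.e. $x$ in the annulus,
\[
|u(x)|^{2}=\frac{1}{|B_{x}|}\int_{B_{x}}|u(x)-u(y)|^{2}\,dy\le\frac{C}{\delta^{n}}\int_{B_{x}}|u(x)-u(y)|^{2}\,dy\le C\delta^{2s}\int_{\R^{n}}\frac{|u(x)-u(y)|^{2}}{|x-y|^{n+2s}}\,dy,
\]
where in the last step I bounded $|x-y|^{n+2s}\le(4\delta)^{n+2s}$ on $B_{x}$ and enlarged the $y$-integral to all of $\R^{n}$.

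Integrating this pointwise bound over $x\in\Omega_{\delta}\setminus\Omega_{2\delta}$ and enlarging the $x$-domain to $\R^{n}$ yields, by Fubini, $\|u\|_{L^{2}(\Omega_{\delta}\setminus\Omega_{2\delta})}^{2}\le C\delta^{2s}[u]_{H^{s}(\R^{n})}^{2}\le C\delta^{2s}\|u\|_{H^{s}(\R^{n})}^{2}$, which is the claim for $\delta$ small; for $\delta$ bounded below one uses the trivial bound $\|u\|_{L^{2}(\Omega_{\delta}\setminus\Omega_{2\delta})}\le\|u\|_{L^{2}(\R^{n})}\le\|u\|_{H^{s}(\R^{n})}$ together with $\delta^{s}$ being bounded below, absorbing the factor into $C$. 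The only genuinely delicate point is the measurable selection of the balls $B_{x}$ and the verification of the uniform exterior ball condition from the $C^{1,1}$ hypothesis: the former can be bypassed since the pointwise inequality is only needed for a.e. $x$, and the centres can be taken along a measurable selection of nearest boundary points (or one covers the annulus by finitely many pieces on which a continuous choice of outward normal is available), while the latter is classical for $C^{1,1}$ domains. Everything else is a routine change of variables and application of Fubini's theorem.
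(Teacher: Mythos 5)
Your argument is correct, but it proceeds along a genuinely different route than the paper. The paper proves Lemma \ref{lem:frac_poinc_L2} via the Caffarelli--Silvestre extension \cite{CS07}: it applies the fundamental theorem of calculus to the degenerate-harmonic extension $\overline{u}$ along the normal and tangential directions (connecting $x_1$ in the annulus to a nearby exterior point $p(x_1)$ where the trace vanishes), inserts the weight $t^{1-2s}$ by H\"older, averages over heights $\tilde{\delta}\in[\delta,2\delta]$, and finally invokes the extension characterisation $\|t^{\frac{1-2s}{2}}\nabla\overline{u}\|_{L^2(\R^{n+1}_+)}\leq C\|u\|_{\dot H^s(\R^n)}$. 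You instead work directly with the Gagliardo seminorm (using $H^s=W^{s,2}$ for $s\in(0,1)$, c.f. \cite{BM12}) and exploit the uniform exterior ball condition of the $C^{1,1}$ domain: since $u$ vanishes a.e.\ on an exterior ball $B_x$ of radius $\delta$ within distance $4\delta$ of $x$, averaging $|u(x)|^2=|u(x)-u(y)|^2$ over $y\in B_x$ and bounding $|x-y|\leq 4\delta$ yields the pointwise estimate $|u(x)|^2\leq C\delta^{2s}\int_{\R^n}|u(x)-u(y)|^2|x-y|^{-n-2s}dy$, after which integration in $x$ finishes the proof; your remarks on measurability (the selection of $B_x$ only enters the derivation of a pointwise bound whose right-hand side is manifestly measurable) and on the regime of $\delta$ bounded away from zero are the right way to dispose of the two minor technical points. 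Your approach is more elementary and self-contained — no extension theory is needed, the argument transfers verbatim to the $W^{s,p}$ setting of Lemma \ref{lem:frac_poinc} by replacing the exponent $2$ with $p$, and in fact it only uses an exterior cone/ball condition, so it would work under weaker geometric hypotheses than $C^{1,1}$. What the paper's extension-based proof buys is a uniform template that carries over, via the trace characterisations of \cite{LS16,BC15}, to the $W^{s,p}$ and Besov variants used in Propositions \ref{prop:scaling_BV} and \ref{prop:scaling_p} without appealing to the coincidence of the Fourier and difference-quotient norms.
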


\begin{proof}
We use the Caffarelli-Silvestre extension $\overline{u}$ of $u$ \cite{CS07}, which is the $H^{1}(\R^{n+1}_+, x_{n+1}^{1-2s})$ solution to the equation
\begin{align*}
\nabla \cdot x_{n+1}^{1-2s} \nabla \overline{u} & = 0 \mbox{ in } \R^{n+1}_+,\\
\overline{u} & = u \mbox{ on } \R^n \times \{0\}.
\end{align*}
For $x_1 \in \Omega_{\delta} \setminus \Omega_{2\delta}$,
$x_2=p(x_1) \in \R^n \setminus \overline{\Omega}$ with $|x_1-x_2|\leq 4 \delta$ and $\tilde{\delta} \in [\delta, 2\delta]$, the fundamental theorem and Hölder's inequality yield
\begin{align*}
|u(x_1)|^2 &= |\overline{u}(x_1,0)|^2
\leq 2|\overline{u}(x_1,\tilde{\delta})|^2 + 
2 \left( \int\limits_{0}^{\tilde{\delta}}|\p_t \overline{u}(x_1,t)| dt \right)^2\\
&\leq 2|\overline{u}(x_1,\tilde{\delta})|^2 + 
C \tilde{\delta}^{2s}\int\limits_{0}^{\tilde{\delta}} t^{1-2s}|\p_t \overline{u}(x_1,t)|^2 dt\\
& \leq 2 |\overline{u}(p(x_1),\tilde{\delta})|^2 + C \tilde{\delta}^2 \int\limits_{0}^{1}|\nabla' \overline{u}(\tau x_1 + (1-\tau)p(x_1), \tilde{\delta})|^2 d\tau\\
& \quad + C \tilde{\delta}^{2s}\int\limits_{0}^{\tilde{\delta}} t^{1-2s}|\p_t \overline{u}(x_1,t)|^2 dt\\
& \leq  2\tilde{\delta}^2 \int\limits_{0}^{1}|\nabla' \overline{u}(\tau x_1 + (1-\tau)p(x_1), \tilde{\delta})|^2 d\tau + C \tilde{\delta}^{2s}\int\limits_{0}^{\tilde{\delta}} t^{1-2s}|\p_t \overline{u}(x_1,t)|^2 dt\\
& \quad
+ C \tilde{\delta}^{2s}\int\limits_{0}^{\tilde{\delta}} t^{1-2s}|\p_t \overline{u}(p(x_1),t)|^2 dt.
\end{align*}
Here the constant $C>1$ changes from line to line and depends on $s$ but not on $\delta$. In deriving the above estimate, we applied the fundamental theorem thrice: First in the normal direction (where we then used Hölder's inequality to insert the weight $t^{1-2s}$), then in the tangential directions and finally once more in the normal direction (where we used the vanishing Dirichlet data for $u(p(x_1))=\overline{u}(p(x_1),0)$).
Estimating the integrals involving the normal derivative by using $\delta <\tilde{\delta}\leq 2\delta$, we thus infer
\begin{align*}
|u(x_1)|^2 
& \leq  2\tilde{\delta}^2 \int\limits_{0}^{1}|\nabla' \overline{u}(\tau x_1 + (1-\tau)p(x_1), \tilde{\delta})|^2 d\tau + C \delta^{2s}\int\limits_{0}^{2\delta} t^{1-2s}|\p_t \overline{u}(x_1,t)|^2 dt\\
& \quad
+ C \delta^{2s}\int\limits_{0}^{2\delta} t^{1-2s}|\p_t \overline{u}(p(x_1),t)|^2 dt.
\end{align*}
Averaging over $\tilde{\delta} \in [\delta, 2\delta]$ yields
\begin{align*}
|u(x_1)|^2 
& \leq 2\delta^{-1} \int\limits_{\delta}^{2\delta} \tilde{\delta}^2 \int\limits_{0}^{1}|\nabla' \overline{u}(\tau x_1 + (1-\tau)p(x_1), \tilde{\delta})|^2 d\tau d \tilde{\delta}
+ C \delta^{2s}\int\limits_{0}^{2\delta} t^{1-2s}|\p_t \overline{u}(x_1,t)|^2 dt\\
& \quad
+ C \delta^{2s}\int\limits_{0}^{2\delta} t^{1-2s}|\p_t \overline{u}(p(x_1),t)|^2 dt\\
& =  2\delta^{-1} \int\limits_{\delta}^{2\delta} \tilde{\delta}^{1+2s} \tilde{\delta}^{1-2s} \int\limits_{0}^{1}|\nabla' \overline{u}(\tau x_1 + (1-\tau)p(x_1), \tilde{\delta})|^2 d\tau d \tilde{\delta}
+ C \delta^{2s}\int\limits_{0}^{2\delta} t^{1-2s}|\p_t \overline{u}(x_1,t)|^2 dt\\
& \quad
+ C \delta^{2s}\int\limits_{0}^{2\delta} t^{1-2s}|\p_t \overline{u}(p(x_1),t)|^2 dt\\
& \leq  C \delta^{-1} \delta^{1+2s} \int\limits_{\delta}^{2\delta} \tilde{\delta}^{1-2s} \int\limits_{0}^{1}|\nabla' \overline{u}(\tau x_1 + (1-\tau)p(x_1), \tilde{\delta})|^2 d\tau d \tilde{\delta}
+ C \delta^{2s}\int\limits_{0}^{2\delta} t^{1-2s}|\p_t \overline{u}(x_1,t)|^2 dt\\
& \quad
+ C \delta^{2s}\int\limits_{0}^{2\delta} t^{1-2s}|\p_t \overline{u}(p(x_1),t)|^2 dt\\
& \leq  C \delta^{2s} \int\limits_{0}^{2\delta} \int\limits_{0}^{1} t^{1-2s} |\nabla' \overline{u}(\tau x_1 + (1-\tau)p(x_1), t)|^2 d\tau dt
+ C \delta^{2s}\int\limits_{0}^{2\delta} t^{1-2s}|\p_t \overline{u}(x_1,t)|^2 dt\\
& \quad
+ C \delta^{2s}\int\limits_{0}^{2\delta} t^{1-2s}|\p_t \overline{u}(p(x_1),t)|^2 dt.
\end{align*}
Finally, integrating over $x_1 \in \Omega_{\delta} \setminus \Omega_{2\delta}$ leads to
\begin{align*}
\|u\|_{L^2(\Omega_{\delta}\setminus \Omega_{2\delta})}^2
 &\leq C \delta^{2s} \int\limits_{0}^1 \int\limits_{0}^{2\delta} t^{1-2s} \|\nabla' \overline{u}(\tau \cdot + (1-\tau)p(\cdot),t)\|_{L^2(\Omega_{\delta}\setminus \Omega_{2\delta})}^2 dt d\tau\\
& \quad + 2C \delta^{2s} \|t^{\frac{1-2s}{2}} \p_t \overline{u}\|_{L^2((\Omega_{\delta} \setminus \Omega_{2\delta}) \times [0,2\delta])}^2\\
& \leq C \delta^{2s} \|t^{\frac{1-2s}{2}} \nabla \overline{u}\|_{L^2(\Omega \times [0,2\delta])}^2\\
& \leq C \delta^{2s}  \|t^{\frac{1-2s}{2}} \nabla \overline{u}\|_{L^2(\R^{n+1}_+)}^2.
\end{align*}
Hence, using that (c.f. \cite{CS07})
\begin{align*}  
\|t^{\frac{1-2s}{2}} \nabla \overline{u}\|_{L^2(\R^{n+1}_+)}^2
\leq C \|u\|_{\dot{H}^s(\R^n)},
\end{align*}
we obtain the desired estimate \eqref{eq:L2Poinc}.
\end{proof}

Similarly, we have the analogue of this in Sobolev and Besov spaces:

\begin{lem}
\label{lem:frac_poinc}
Let $s\in (0,1)$, $p\in (1,\infty)$ and $n\geq 1$.
Let $\Omega \subset \R^n$ be an open, bounded $C^{1,1}$ domain.
Let $u\in W^{s,p}(\R^n)$ with  $\supp(u) \subset \overline{\Omega}$ and denote $\Omega_{\delta}=\{x\in \Omega: \dist(x,\partial \Omega)\geq \delta\}$ for $\delta>0$.
Then, there exists $C=C(s,p,\Omega,n)>1$ such that
\begin{align}
\label{eq:Wsp_Poinc}
\|u\|_{L^p(\Omega_{\delta}\setminus \Omega_{2\delta})}
\leq C \delta^{s} \|u\|_{W^{s,p}(\R^n)}.
\end{align}

\end{lem}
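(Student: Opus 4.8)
The plan is to avoid the Caffarelli--Silvestre extension used in the proof of Lemma~\ref{lem:frac_poinc_L2} (which is intrinsically tied to the Hilbert case $p=2$) and to argue directly with the Gagliardo seminorm
\begin{align*}
[u]_{W^{s,p}(\R^n)}^p := \int\limits_{\R^n}\int\limits_{\R^n}\frac{|u(x)-u(y)|^p}{|x-y|^{n+sp}}\,dy\,dx ,
\end{align*}
which is finite for $u\in W^{s,p}(\R^n)$ with $s\in(0,1)$ and is bounded by $\|u\|_{W^{s,p}(\R^n)}^p$. First I would dispose of the trivial regime: since $\Omega$ is a bounded $C^{1,1}$ domain it satisfies a uniform exterior ball condition with some radius $r_0>0$, and for $\delta\geq r_0$ one simply estimates $\|u\|_{L^p(\Omega_\delta\setminus\Omega_{2\delta})}\leq\|u\|_{L^p(\R^n)}\leq r_0^{-s}\,\delta^s\,\|u\|_{W^{s,p}(\R^n)}$. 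Hence from now on I assume $0<\delta<r_0$.

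The core of the argument is a pointwise ``witness'' construction. For a.e.\ $x\in\Omega_\delta\setminus\Omega_{2\delta}$ one has $\dist(x,\partial\Omega)<2\delta$; using the exterior ball condition at the nearest boundary point of $x$, I would produce a ball $B_x=B(\zeta_x,c\delta)\subset\R^n\setminus\overline{\Omega}$ with a dimensional constant $c\in(0,1)$ and $|x-\zeta_x|\leq C_0\delta$. Since $\supp(u)\subset\overline{\Omega}$, the function $u$ vanishes on $B_x$, so that
\begin{align*}
|u(x)|^p &= \frac{1}{|B_x|}\int\limits_{B_x}|u(x)-u(y)|^p\,dy \\
&\leq \frac{C}{\delta^{n}}\int\limits_{B_x}|u(x)-u(y)|^p\,dy .
\end{align*}
Since $|x-y|\leq C_1\delta$ for every $y\in B_x$, the elementary inequality $|x-y|^{n+sp}\leq C_1^{n+sp}\,\delta^{n+sp}$ upgrades this to
\begin{align*}
|u(x)|^p \leq C\,\delta^{sp}\int\limits_{B_x}\frac{|u(x)-u(y)|^p}{|x-y|^{n+sp}}\,dy .
\end{align*}

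Finally I would integrate this inequality over $x\in\Omega_\delta\setminus\Omega_{2\delta}$ and then enlarge both domains of integration to all of $\R^n$, which costs nothing since the integrand is nonnegative, obtaining the full Gagliardo double integral:
\begin{align*}
\|u\|_{L^p(\Omega_\delta\setminus\Omega_{2\delta})}^p
\leq C\,\delta^{sp}\,[u]_{W^{s,p}(\R^n)}^p
\leq C\,\delta^{sp}\,\|u\|_{W^{s,p}(\R^n)}^p .
\end{align*}
Taking $p$-th roots yields \eqref{eq:Wsp_Poinc}. The step I expect to be the main obstacle is making the witness construction precise and, crucially, uniform in both $x$ and $\delta$: one must verify that \emph{every} point of the thin shell $\Omega_\delta\setminus\Omega_{2\delta}$ sees, within distance $O(\delta)$, a ball of radius comparable to $\delta$ contained in $\R^n\setminus\overline{\Omega}$, with constants depending only on $\Omega$ and $n$. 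This is precisely where the $C^{1,1}$ regularity of $\partial\Omega$ enters (through the uniform exterior ball condition); once it is in place, the remaining steps are the short computation above. An alternative route, closer in spirit to the proof of Lemma~\ref{lem:frac_poinc_L2}, would replace the Gagliardo seminorm by an extension argument adapted to $L^p$, but the direct approach seems more transparent for general $p\in(1,\infty)$.
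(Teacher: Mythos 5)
Your proposal is correct, and it takes a genuinely different route from the paper. The paper proves the $L^p$ statement by mimicking its $L^2$ argument: it uses a degenerate (Caffarelli--Silvestre type) extension with weight $t^{1-\frac1p-s}$, applies the fundamental theorem in the normal and tangential directions together with H\"older, and then invokes the trace characterisation of $W^{s,p}(\R^n)$ via weighted Sobolev spaces on $\R^{n+1}_+$ (citing \cite{LS16}, \cite{BC15}) to bound the extension energy by $\|u\|_{W^{s,p}(\R^n)}$. You instead work directly with the Gagliardo seminorm, which is exactly how the paper defines $W^{s,p}$ in Section \ref{sec:FS}, and exploit the support condition through an exterior ``witness'' ball: for $x$ in the shell $\Omega_\delta\setminus\Omega_{2\delta}$ the $C^{1,1}$ regularity gives a uniform exterior ball of radius $r_0$ at the nearest boundary point, and for $\delta<r_0$ the sub-ball of radius $\delta/2$ centred at distance $\delta$ along the exterior normal lies in $\R^n\setminus\overline{\Omega}$ within distance $O(\delta)$ of $x$; since $u$ vanishes there, averaging $|u(x)-u(y)|^p$ over this ball and using $|x-y|\leq C\delta$ yields $|u(x)|^p\leq C\delta^{sp}\int_{B_x}|u(x)-u(y)|^p|x-y|^{-n-sp}\,dy$, and integrating in $x$ gives the claim; the regime $\delta\geq r_0$ is trivial. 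This argument is complete as sketched. What each approach buys: yours is more elementary and self-contained (no extension theory, no trace characterisation from the literature), it treats all $p$ uniformly (it would even work for $p=1$, hence for the $B^s_{1,1}=W^{s,1}$ scale used in Proposition \ref{prop:scaling_BV}), and it only needs a uniform exterior ball --- indeed even an exterior measure-density condition --- so less than $C^{1,1}$ regularity suffices; the paper's extension route, on the other hand, runs structurally parallel to Lemma \ref{lem:frac_poinc_L2} and transfers verbatim to the Besov-space characterisations of \cite{LS16}, \cite{BC15}, which is how the paper obtains the Besov variant it mentions afterwards.
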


\begin{proof}
The argument follows essentially as above. However for the estimate in the normal direction, we use
\begin{align*}
|\overline{u}(x_1,0)|^p
&\leq C |\overline{u}(x_1,\delta)|^p + C\left(\int\limits_{0}^{\delta} \p_t \overline{u} dt\right)^{p}\\
& \leq C|\overline{u}(x_1,\delta)|^p + C\int\limits_{0}^{\delta} |t^{1-\frac{1}{p}-s }\p_t \overline{u}|^p dt \left(\int\limits_{0}^{\delta} t^{-1+\frac{sp}{p-1}}dt \right)^{p-1}\\
& \leq C|\overline{u}(x_1,\delta)|^p +  C\delta^{sp} \int\limits_{0}^{\delta} t^{p-1-s p}|\p_t \overline{u}|^p dt . 
\end{align*}
Combining this with estimates in the tangential directions similarly as in the proof of Lemma \ref{lem:frac_poinc}, we obtain 
\begin{align*}
\|u\|_{L^p(\Omega_{\delta}\setminus \Omega_{2\delta})}
&\leq C \delta^{s} \|t^{1-\frac{1}{p}-s}\nabla \overline{u}\|_{L^p(\R^{n+1}_+)}
\leq C \delta^{s} \|u\|_{W^{s,p}(\R^n)}.
\end{align*}
Here we used the trace characterisation of $W^{s,p}(\R^n)$, c.f. \cite[Section 10]{LS16}, where the authors rely on the characterisation from \cite{BC15}.
\end{proof}

A similar argument using the characterisation of Besov spaces from 
\cite[Section 10]{LS16} (where the authors again rely on the characterisation from \cite{BC15}) also yields a similar Poincar\'e estimate in Besov spaces.

\bibliographystyle{alpha}
\bibliography{citations1}

\end{document}